\definecolor{darkgreen}{rgb}{0,0.5,0}
\newfont{\sheaf}{eusm10 scaled\magstep1}
\newtheorem{theorem}{Theorem}[section]
\newtheorem{proposition}[theorem]{Proposition}
\newtheorem{definition}[theorem]{Definition}
\newtheorem{corollary}[theorem]{Corollary}
\newtheorem{lemma}[theorem]{Lemma}
\newtheorem{remark}[theorem]{Remark}
\DeclareMathOperator{\Img}{Im}
\DeclareMathOperator{\Ker}{Ker}
\DeclareMathOperator{\Supp}{Supp}
\DeclareMathOperator{\Ext}{Ext}
\DeclareMathOperator{\Pic}{Pic}
\DeclareMathOperator{\rank}{rk}
\DeclareMathOperator{\Rk}{rk}
\DeclareMathOperator{\G}{Gr}
\DeclareMathOperator{\coker}{coker}
\newcommand{\Q}{\mathbb{Q}}
\newcommand{\R}{\mathbb{R}}
\newcommand{\PP}{\mathbb{P}}
\newcommand{\OO}{\mathcal{O}}
\newcommand{\w}{{\underline w}}
\newcommand{\wa}{({\underline w},\alpha)}
\DeclareMathOperator{\mrank}{\underline{rk}}
\DeclareMathOperator{\wdeg}{deg_{\underline{w}}}
\DeclareMathOperator{\wrank}{rk_{\underline{w}}}
\newcommand{\Gwa}{\mathcal{G}_{\underline{w},\alpha}}
\def\BOX{\hfill\lower.5\baselineskip\hbox{$\Box$}}
\newcommand{\Gr}{Gr}
\numberwithin{equation}{section}
\newcounter{nootje}
\renewcommand\check[1]
\begin{document}

\title[Coherent systems on curves of compact type]{Coherent systems on curves of compact type}

\author{Sonia Brivio}
\address{Dipartimento di Matematica e Applicazioni,
	Universit\`a degli Studi di Milano-Bicocca,
	Via Roberto Cozzi, 55,
	I-20125 Milano, Italy}
\email{sonia.brivio@unimib.it}

\author{Filippo F. Favale}
\address{Dipartimento di Matematica e Applicazioni,
	Universit\`a degli Studi di Milano-Bicocca,
	Via Roberto Cozzi, 55,
	I-20125 Milano, Italy}
\email{filippo.favale@unimib.it}

\date{\today}
 \thanks{
\textit{2010 Mathematics Subject Classification}:  Primary:  14H60; Secondary: 14D20\\
 \textit{Keywords}:  Coherent system, Stability, Curves of compact type. Nodal curves. \\
Both authors are partially supported by INdAM - GNSAGA.\\
}
 \maketitle
 
\begin{abstract}
Let $C$ be a polarized nodal curve of compact type.  In this paper we study coherent systems $(E,V)$ on $C$ given by a depth one sheaf $E$ having rank $r$ on each irreducible component of $C$ and a subspace $V \subset H^0(E)$ of dimension $k$. Moduli spaces of stable coherent systems have been introduced by \cite{KN} and depend on a real parameter $\alpha$. We show that when $k \geq r$, these moduli spaces coincide  for  $\alpha$ big enough.  Then we deal with the case $k=r+1$: when the degrees of the restrictions of $E$ are big enough we are able to describe an irreducible component  of this moduli space by using the dual span construction. 
\end{abstract}

\section*{Introduction}
Coherent systems on  smooth curves can be seen  as the generalisation of classical linear systems. They were studied first, under different names, by Bradlow (\cite{BD}), Bertram (\cite{B}) and Le Potier (\cite{Le}). They are closely related to higher rank Brill-Noether theory: for relevant results on this argument one can see for example \cite{Br}, \cite{N} and \cite{BGMN}. Moreover, coherent systems have been useful tool in studying theta divisors and the geometry of moduli spaces of vector bundles on curves (see for instance, some results of the authors: \cite{Bri1}, \cite{Bri2},  \cite{BF1} and  \cite{BV}). Nevertheless, they are already interesting by themselves since a notion of stability can be defined, depending on a real parameter $\alpha$. Varying $\alpha$ one gets a family of moduli spaces, providing examples of higher dimensional algebraic varieties with a rich and interesting geometry. For comparison of different notions of stability arising in moduli theory see for instance \cite{BB}.

Questions concerning emptiness or non emptiness, smoothness, irreducibility, and singularities have been  deeply studied by many authors. Among them, we can point out some relevant results in \cite{BGN} and \cite{BGMN}.  

Coherent systems can be defined even on a singular curve. A notion of semistability has been introduced, depending on a polarization $\underline{w}$ on the curve and a real parameter $\alpha$, and coarse moduli spaces can be constructed as well as in the smooth case (see \cite{KN}). Nevertheless, there has been little work in the singular case, (for example see \cite {B1} and \cite{B2}).  The situation becomes much better in the nodal case. In fact, many results of \cite{BGMN} and \cite{BGN} have been extended to irreducible nodal curves by Bhosle in \cite{Bho}. 

In this paper we start the study of coherent systems on a reducible nodal curve $C$ of compact type. By a coherent system on $C$ we mean a pair $(E,V)$, given by a depth one sheaf $E$ on $C$ and by a subspace $V \subseteq H^0(E)$. Fix a polarization $\w$ on the curve $C$, then we can   define   $\w$-rank and  $\w$-degree of $E$ (denoted as $\wrank(E)$ and $\wdeg(E)$). For any $\alpha \in {\mathbb R}$,  the  notion of $\wa$-stability has been  defined, see \cite{KN}.   

A pair $(E,V)$ is called generated (resp. generically generated) if the map of evaluation of global sections of $V$ is surjective (resp. generically surjective), see
Section \ref{generated} for details.
We focus our attention on  generated pairs  $(E,V)$ on the curve $C$   of multitype  $({\underline r}, d, k)$: 
i.e. $E$ has rank $r$ on each irreducible component of $C$, $\wdeg(E) = d$  and $ \dim V= k \geq r$. 
We denote by ${\mathcal G}_{\wa}({\underline r},d,k)$ the  moduli space parametrizing families of $\wa$-stable coherent systems as above. As in the  case  of smooth curves and of irreducible nodal curves, we have the following results, which are proved in   Proposition \ref{PROPO:gen}, Theorem \ref{THM:A} and Corollary \ref{COR:A} and are summarized as follows. 
\hfill\par
{\bf Theorem A}
\it
{Let $C$ be  a nodal  reducible  curve of compact type and let ${\underline w}$ be a polarization on it. 
Let $r \geq 1$, $ d \geq 0$ and $k \geq r$ integers.
There exists $\alpha_l \in {\mathbb R}$, depending on ${\underline w}$, $r$, $d$, $k$ and $p_a(C)$,  such that the moduli spaces 
${\mathcal G}_{\wa}({\underline r},d,k)$ coincide for $\alpha > \alpha_l$. 
Moreover, for any $\alpha > \alpha_l$, any $\wa$-stable $(E,V)$ is generically generated.} 
\hfill\par
\rm
Let  $(E,V)$ be a coherent system on the curve $C$, we can define in a natural way, coherent systems $(E_i,V_i)$ which  are  the restrictions of $(E,V)$ to each irreducible component $C_i$. As in the case of $\w$-stability for depth one sheaves on $C$ (see \cite{T2} and \cite{BF2}), $\alpha$-stability on the restrictions does not imply, in general, $\wa$-stability.  Nevertheless, when $k$ and $r$ are coprime, we give a sufficient condition in order to ensure that
$\alpha$-stability of restrictions implies $\wa$-stability: this is proved in Theorem \ref{THM:stab}.

In the second part of this paper we will concentrate on  coherent systems with $k=r+1$. In the case of smooth curves, they have been studied by using the dual span construction, which was introduced by Butler in \cite{But}. For Petri curves, it is almost completely known when   such moduli spaces are non empty (\cite{BBN}). In this case, for any $r \geq  2$, $d >0$, the moduli space ${\mathcal G}_{\alpha}(r,d,r+1)$ is birational to the moduli space ${\mathcal G}_{\alpha}(1,d,r+1)$  for large $\alpha$ (\cite{BGMN}).

We generalize  dual span  costruction to
generated coherent systems $(L,W)$ on  nodal a curve of compact type $C$ where $L$ is a line bundle. More precisely, we assume that $C$ has $\gamma$ irreducible components $C_i$ of genus $g_i \geq 2$. For any $(d_1,\dots, d_{\gamma}) \in {\mathbb N}^{\gamma}$, we can consider the subvariety
$$X_{d_1,\dots,d_{\gamma}} \subset {\mathcal{G}}_{\wa}({\underline 1},d,r+1),$$
parametrizing all coherent systems $(L,W)$ where  $L$ is a  line bundle whose restriction on $C_i$ has degree $d_i$. 
The first result for this part is Theorem \ref{Main1} which is summarized as follows.
\hfill\par
{\bf Theorem B}
\it
Under the hypothesis of Theorem A. 
If $d_i \geq max(2g_i +1,g_i +r)$ and $d= \sum_{i=1}^{\gamma} d_i$, then,  for $\alpha$ big enough, the closure
$\overline{X_{d_1,\dots,d_{\gamma}}} \subset {\mathcal G}_{\wa}({\underline 1},d,r+1)$
is an irreducible component of  dimension equal to the Brill-Noether number $\beta_C(1,d,r)$.
Any $(L,W) \in X_{d_1,\dots,d_{\gamma}}$ is a smooth point of the moduli space. 
\rm\hfill\par

Then by applying the  dual span construction to  coherent systems  in  
$X_{d_1,\dots,d_{\gamma}}$ we obtain the main results  of the second part of the paper. These are Theorem \ref{main2} and Theorem \ref{THM:B}  and are summarized  as follows. 
\hfill\par
{\bf Theorem C}
\it Under the hypothesis of Theorem B. 
For $\alpha$ big enough, there exists an irreducible component $Y_{d_1,\dots,d_{\gamma}} \subset {\mathcal{G}}_{\wa}({\underline r},d,r+1)$    which is birational to $X_{d_1,\dots,d_{\gamma}}$, with  dimension equal to the Brill-Nother number $ \beta_C(r,d,r+1)$. 
\hfill\par
We have the following commutative diagram:
$$
\xymatrix@C=50pt{
\overline{X_{d_1,\cdots,d_{\gamma}}}\ar@{-->}[r]^-{\mathcal{D}}\ar@{-->}[d]_{\pi_1} &     Y_{d_1,\dots,d_\gamma}\ar@{-->}[d]^{\pi_2} \\
  \Pi_{i=1}^{\gamma} \mathcal{G}_{C_i,\alpha}(1,d_i,r+1)\ar@{-->}[r]_-{\Pi D_i} & 
  \Pi_{i=1}^{\gamma} \mathcal{G}_{C_i,\alpha}(r,d_i,r+1)
}
$$
where 
$\mathcal{G}_{C_i,\alpha}(s,d_i,r+1)$  is  the moduli space of $\alpha$-stable coherent systems of type $(s,d_i,r+1)$ on the curve $C_i$, 
$\mathcal D$  and $D_i$ are  the map sending  a coherent system 
 to its dual span and the vertical maps are restrictions to the components of $C$. 
Finally,  the maps $\pi_1$ and $\pi_2$ are dominant. 
\hfill\par
\rm 
The paper is organized as follows. In Section 1 we recall  basic properties of nodal curves and depth one sheaves. In Section 2 we introduce  the notion of coherent system,  $\wa$-stability and we recall some results concerning their moduli spaces. In Section 3 we focus on generated coherent systems  of  multitype $({\underline r},d,k)$ and we prove Theorem A.  In Section 4, by using dual span construction, we produce  $\wa$-stable coherent systems of multitype $({\underline r},d,r+1)$. Finally, in  Section 5 we prove  the results stated in Theorem B and Theorem C.   
\hfill\par

\section{Nodal reducible curves and depth one sheaves}
\label{nodal curves}
In this paper we will consider connected reduced and reducible curves $C$ over the complex numbers which are {\it nodal}, i.e. complete algebraic curves whose singularities are at most {\it  ordinary double points}.
We recall that a connected nodal curve is said of {\it compact type} if every irreducible component of $C$ is smooth and  its  dual graph  is a tree. For theory of nodal curves see \cite{ACG}.
We will always assume that $C$ is a nodal curve of compact type and that each irreducible component $C_i$ of $C$ is smooth of genus $g_i\geq 2$.
If we denote by $\gamma$ the numbers of irreducible components of $C$  and by $\delta$ the number of nodes of $C$, we have $\gamma = \delta + 1$. The normalization map  of $C$ is
$$ \nu \colon C^{\nu} \to C,$$
where $C^{\nu} = {\bigsqcup}_{i=1}^{\gamma} C_i$ and  $\nu$ induces an isomorphism $\Pic (C) \simeq \bigoplus_{i=1}^{\gamma}\Pic(C_i)$ between the Picard groups.
In particular, we will denote by   $\Pic^{(d_1,\dots,d_{\gamma})}(C)$ the subgroup  of line bundles  $L$  on $C$ whose restriction to $C_i$ is  
 in $\Pic^{d_i}(C_i)$. 
The arithmetic genus of $C$ is 
$$p_a(C) = 1 - \chi(\OO_C) = \sum_{i = 1}^{\gamma} g_i.$$

\noindent We recall that, since $C$ is nodal, then  it can be  embedded in a smooth projective surface $S$, see \cite{A79}.  Let $B$ be any subcurve of $C$. The complementary curve of $B$,   denoted by $C-B$,  is defined as the closure of $C\setminus B$ and  it is actually the difference of $C - B$ as divisors on $S$. We will denote by $\Delta_B$ the intersection of $B$ with its complementary curve, it is given by double points common to a component of $B$ and one of $C-B$. In particular, when $C_i$ is a component of $C$, $\Delta_{C_i}$ is given by double points on $C_i$. To simplify notations  we  set:  $\Delta_{C_i}= \Delta_i$,  $C-C_i=C_i^{c}$ and  $\delta_i=\#\Delta_i$.
For any subcurve $B$ of $C$ we have the following exact sequence:
\begin{equation}
\label{EXSEQ:CB}
0\to \OO_{C-B}(-\Delta_B) \to \OO_C \to \OO_B \to 0,
\end{equation}
from which we deduce
$$p_a(C) = p_a(B) + p_a(C-B) + deg(\Delta_B) -1.$$ 
In particular, when  $B=C_i$, we have:
\begin{equation}
\label{EXSEQ:CI}
0\to \OO_{C_i^{c}}(-\Delta_i) \to \OO_C \to \OO_{C_i} \to 0,
\end{equation}
which gives $p_a(C_i^c)=\sum_{j\not=i}g_j +1 -\delta_i$. 

We stress a useful fact that we will use a lot in the following.

\begin{remark}
Since $C$ is of compact type, we can find a component $C_i$ such $C_i^{c}$ is a connected curve of compact type too. Actually, it is enough to show that  there exists a component $C_i$  with   $\delta_i=1$, i.e. such that only one node of $C$ lies on $C_i$. Assume on the contrary that, for all $i=1\dots \gamma$,  we have $\delta_i\geq 2$. As  every node lies on  two components, we have $$\delta=\frac{1}{2}\sum_{i=1}^{\gamma}\delta_i\geq \frac{1}{2}\sum_{i=1}^{\gamma}2=\gamma.$$
But this is impossible as $\gamma = \delta+1$.
\end{remark}

The dualizing sheaf $\omega_C$ is an invertible sheaf. Moreover, for any subcurve $B$ of $C$ the dualizing sheaf ${\omega}_B$ is  invertible too and we have:
\begin{equation}
    {\omega_C}_{\vert B} = {\omega}_B \otimes \OO_B(\Delta_B),
\end{equation}
in particular for any component $C_i$ we have:
${\omega_C}|_{C_i}= {\omega}_{C_i} \otimes \OO_{C_i}(\Delta_i).$
\hfill\par

\begin{definition} A {\it polarization} on the curve $C$   is a vector ${\underline w}=  (w_1,\dots,w_\gamma) \in \Q^{\gamma}$, with   \begin{equation}
\label{polarization}
 0 < w_i < 1, \quad   \sum_{i=1}^{\gamma}w_i=1.
\end{equation}
\end{definition}
We will fix  an ample primitive invertible sheaf $\OO_C(1)$ on the curve $C$,  with  $a_i = \deg({\OO_{C}(1)}|_{C_i})$. It determines a  polarization  ${\underline w}$  
by  defining  $w_i= \frac{a_i}{\sum_{k = 1}^{\gamma}a_k}$. Note that since $\OO_C(1)$ is ample, $a_i \geq 1$ and $\gcd(a_1,\dots,a_{\gamma})= 1$ since $\OO_C(1)$ is primitive. 
\hfill\par
We recall the notion of depth one sheaves on nodal curves, for details see \cite[Chapter VII]{Ses}. 
A coherent sheaf $E$ on $C$ is said of {\it depth one\footnote{Different terms are used to refer to such sheaves. As $C$ is  a scheme of pure dimension $1$, this is equivalent to ask that $E$ is {\it pure of dimension $1$} or that $E$ is {\it torsion free}. }} if  $\dim F = \dim supp(F) = 1$ for every subsheaf $F$ of $E$. A coherent sheaf $E$ on $C$ is of depth one if the stalk of $E$ at the node $p=C_i\cap C_j$ is isomorphic to $\OO_p^a \oplus \OO_{q_i}^{b_i} \oplus \OO_{q_j}^{b_j}$, where $\nu^{-1}(p)=\{q_i,q_j\}$ and $\OO_{q_t}=\OO_{C_t,q_t}$. In particular, any  vector bundle  on $C$ is a sheaf of depth one. Let $E$ be a sheaf of depth one  on $C$, its restriction $E\vert_{C_i \setminus \Delta_i}$ is either zero or it is locally free;
moreover, any subsheaf of $E$ is of depth one too.
\hfill\par
Let $E$ be a sheaf of depth one on $C$,  we set 
\begin{equation}
\label{restriction/torsion}
E_i = E \otimes \OO_{C_i} /Torsion,
\end{equation}
which is said {\it the restriction  of $E$ modulo torsion} on the component $C_i$.  If $E_i$ is not zero,  we set  $r_i= \rank(E_i)$;  otherwise we set $r_i = 0$. 
We associate to $E$:
\begin{equation}
\label{multirank}
\mrank(E) = (r_1,\dots,r_\gamma),    
\end{equation}
which is said {\it the multirank} of $E$;
\begin{equation}
\label{w-rank}
\wrank (E) = \sum_{i = 1}^{\gamma}w_i r_i
\end{equation}
which is said the ${\underline w}$-{\it rank} of $E$;
\begin{equation}
\label{w-deg}
\wdeg E = \chi(E)-\wrank(E) \chi(\OO_C),
\end{equation}
which is said the ${\underline w}$-{\it degree} of $E$. 

Note that ${\underline w}$-rank and ${\underline w}$-degree are not  necessary integers. When $E$ is a vector bundle on $C$, i.e. it is locally as $\OO_C^r$,  then the ${\underline w}$-rank of $E$ is actually $r$ and the ${\underline w}$-degree of $E$  is an integer too. 
\hfill\par

\begin{lemma}
\label{LEM:chi}
Let $E$ be a depth one sheaf on $C$ and let $E_i$ be the restriction modulo torsion of $E$ to $C_i$. Then we have:
\begin{enumerate}
\item let  $\mrank(E) = (r_1,\dots,r_\gamma)$ and $r_M=\max(r_1, \cdots,r_{\gamma})$:
$$\sum_{i=1}^{\gamma} \chi(E_i) -r_{M}(\gamma -1) \leq \chi(E) \leq \sum_{i=1}^{\gamma} \chi(E_i);$$
\item{} if $\mrank(E) = (r,\dots,r)$: 
$$  \sum_{i=1}^{\gamma} \deg(E_i) \leq \wdeg(E)  \leq \sum_{i=1}^{\gamma} \deg(E_i) + r(\gamma-1);$$
\item if $E$ is locally free  of rank $r$, then we have:
$$\chi(E) = \sum_{i=1}^{\gamma} \chi(E_i) -r(\gamma-1) \quad  \wdeg(E)= \sum_{i=1}^{\gamma} \deg(E_i);$$
\item{} if $E$ is locally free and $h^0(E_i) =0$ for any $i=1,\cdots,\gamma$, then we also have $h^0(E)= 0$. 
\end{enumerate}
\end{lemma}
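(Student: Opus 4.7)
The strategy is to deduce all four parts from a single canonical short exact sequence relating $E$ to the direct sum of (the pushforwards of) its component-wise restrictions modulo torsion. I would construct
\begin{equation*}
0\to E\to \bigoplus_{i=1}^\gamma \iota_{i*} E_i \to T\to 0,
\end{equation*}
where $\iota_i\colon C_i\hookrightarrow C$ is the inclusion and $T$ is supported at the nodes of $C$. Away from the nodes the natural map is an isomorphism, so everything reduces to a local calculation at each node $p=C_i\cap C_j$. Using the local description of depth one sheaves recalled in Section~\ref{nodal curves}, write $E_p\cong \OO_p^{a(p)}\oplus \OO_{q_i}^{b_i(p)}\oplus \OO_{q_j}^{b_j(p)}$ with $a(p)+b_i(p)=r_i$ and $a(p)+b_j(p)=r_j$. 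A direct matrix computation shows that $E_p\to (E_i)_p\oplus (E_j)_p$ is injective and that its cokernel is a skyscraper of length $a(p)$ at $p$: the torsion summands $\OO_{q_i}^{b_i(p)}$ and $\OO_{q_j}^{b_j(p)}$ cover their corresponding factors in the target, while the free part $\OO_p^{a(p)}$ lands exactly inside the ``agreement at $p$'' submodule of $\OO_{q_i}^{a(p)}\oplus \OO_{q_j}^{a(p)}$, whose colength is $a(p)$. This local analysis is the main (and essentially only) obstacle.

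Once the exact sequence is in place, (1) and the $\chi$-identity in (3) follow at once. Taking Euler characteristics gives $\chi(E)=\sum_i\chi(E_i)-\sum_p a(p)$, and the bounds $0\leq a(p)\leq \min(r_i,r_j)\leq r_M$, combined with the compact-type identity $\delta=\gamma-1$, yield the inequalities of (1). When $E$ is locally free one has $b_i(p)=b_j(p)=0$ at every node, hence $a(p)=r$, and summing over the $\gamma-1$ nodes gives exactly $\chi(E)=\sum_i\chi(E_i)-r(\gamma-1)$.

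For (2) and the $\wdeg$-equality in (3) I would just translate the $\chi$-bounds into statements about $\wdeg$ via $\wdeg(E)=\chi(E)-\wrank(E)\chi(\OO_C)$, using $\wrank(E)=r$, $\chi(\OO_C)=1-\sum_i g_i$, and Riemann-Roch $\chi(E_i)=\deg(E_i)+r(1-g_i)$ on each smooth $C_i$; the two inequalities of (2) and the equality in (3) then reduce to direct algebra. Part (4) is obtained by taking the long exact cohomology sequence of the displayed sequence above: its initial segment $0\to H^0(E)\to\bigoplus_i H^0(C_i,E_i)$ immediately forces $h^0(E)=0$ whenever each $h^0(E_i)=0$.
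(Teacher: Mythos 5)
Your argument is correct, and for parts (1)--(3) it follows essentially the same route as the paper: the canonical sequence $0\to E\to\bigoplus_i E_i\to T\to 0$ (which the paper takes from Seshadri rather than re-deriving, but your local analysis at a node $p$, giving $T_p\cong\C^{a(p)}$ with $a(p)\le\min(r_i,r_j)$ and $a(p)=r$ in the locally free case, is exactly the computation the paper invokes), followed by the same Riemann--Roch bookkeeping to convert the $\chi$-bounds into the $\wdeg$-statements of (2) and (3), using $\delta=\gamma-1$. The genuine difference is in part (4): the paper proves it by induction on $\gamma$, choosing a component $C_i$ with $\delta_i=1$ (a leaf of the dual tree), tensoring the sequence $0\to\OO_{C_i^c}(-p_{ij})\to\OO_C\to\OO_{C_i}\to 0$ with $E$, and applying the inductive hypothesis to $E|_{C_i^c}(-p_{ij})$; you instead just take global sections of the same canonical sequence already used for (1), getting the left-exact piece $0\to H^0(E)\to\bigoplus_i H^0(E_i)$, which kills $H^0(E)$ at once. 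Your version is shorter, avoids both the induction and the existence of a leaf component (so it does not use the compact-type hypothesis at all), and in fact does not even need $E$ locally free -- the injectivity of $E\to\bigoplus_i E_i$ only uses that $E$ has depth one. The paper's inductive argument, on the other hand, is the template it reuses later (e.g.\ in the proof of Lemma \ref{LEM:h1}(3)), where one needs to twist by $-\Delta_i$ on the complementary curve, which is why it is set up that way; but as a proof of (4) alone your route is perfectly adequate and arguably cleaner.
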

\begin{proof}
(1) We have an exact sequence (see \cite{Ses})
$$ 0 \to E \to \bigoplus_{i=1}^{\gamma} E_i \to T \to 0 $$
where $T$ is a torsion sheaf whose support in contained in the set of nodes of $C$.  We deduce that $\chi(E)=\sum_{i=1}^{\gamma} \chi(E_i) - \chi(T)$. Note that  $\chi(T) = l(T) \geq 0$, hence 
 $\chi(E) \leq \sum_{i=1}^{\gamma} \chi(E_i)$.  Let  $p \in C_i \cap C_j$ be a node, such that $\nu^{-1}(p)=\{q_i,q_j\}$. If   $E_p \simeq \OO_p^s \oplus \OO_{q_i}^{b_i} \oplus \OO_{q_j}^{b_j}$ with $0 \leq s \leq \min(r_i,r_j)$, $s+b_i= r_i$  and $s+b_j= r_j$, then $T_{p} \simeq {\mathbb C}^s$,  see \cite{Ses}.  This implies  that   $l(T) \leq r_{M}(\gamma -1)$ and we can conclude that 
 $\chi(E) \geq \sum_{i=1}^{\gamma} \chi(E_i) - r_M(\gamma -1)$. 
\hfill\par
(2) From the above sequence we obtain 
$\wdeg(E) = \wdeg( \bigoplus_{i=1}^{\gamma} E_i ) - l(T)$. We have:
$$ \wdeg\left(\bigoplus_{i=1}^{\gamma} E_i \right) =
\sum_{i=1}^{\gamma}(\deg(E_i) +r(1-g_i))
-r(1-p_a(C)) = \sum_{i=1}^{\gamma}\deg(E_i) +r (\gamma-1).$$
As $0 \leq l(T) \leq r(\gamma -1)$, the assertion follows. 
\hfill\par

(3) If $E$ is  locally free,  for any node $p \in C_i \cap C_j$, we have $E_p \simeq \OO_p^{r}$. This implies that $l(T) = r(\gamma -1)$ and the first claim follows. 
By definition we have:
$$ \wdeg(E) =  \chi(E) -\wrank(E) \chi(\OO_C) = \sum_{i=1} ^{\gamma} \chi(E_i)- r(\gamma-1) -r \chi(O_C).$$
Since $\chi(E_i)= \deg(E_i) + r(1-g_i)$ we obtain:
$\wdeg(E) = \sum_{i=1}^{\gamma}\deg(E_i)$. 
\hfill\par
(4) We prove the assertion by induction on the number $\gamma$ of irreducible components of $C$. 
If $\gamma=2$, then $C$ has two irreducible components and  a single node $p$. By tensoring \ref{EXSEQ:CI} with $E$ we have the exact sequence
$$0 \to E_1(-p) \to E \to E_2 \to 0.$$
If we pass to global sections we obtain
$$0 \to H^0(E_1(-p)) \to H^0(E) \to H^0(E_2) \to...$$
and $h^0(E) = 0$ since  $h^0(E_2) =h^0(E_1(-p)) = 0$.
\hfill\par
Assume now that  $C$ is a nodal curve with  $\gamma \geq 3$ irreducible components. As we have seen, there exists an irreducible component $C_i$ having a single node $p_{ij}$. We can consider the exact sequence:
$$ 0 \to O_{C_i^c}(-p_{ij}) \to O_C \to O_{C_i} \to 0,$$
tensoring with $E$ we obtain:
$$ 0 \to E|_{C_i^c}(-p_{ij}) \to E \to E_i \to 0,$$
passing to global sections: 
$$ 0 \to H^0(E|_{C_i^c}(-p_{ij})) \to H^0(E) \to H^0(E_i) \to ...$$
Notice that $p_{ij}$ is a smooth point for $C_i^c$, so $E|_{C_i^c}(-p_{ij})$ is locally free on the curve $C_i^c$.
Moreover we have:
  ${E|_{C_i^c}(-p_{ij})}_{\vert C_j} = E_j(-p_{ij})$ 
  and ${E|_{C_i^c}(-p_{ij})}_{\vert C_k} = E_k$ for $k\not=i,j$. 
The curve $C_i^c$ is a nodal connected curve of compact type  with $\gamma -1$ components,  by induction hypothesis we have $h^0({E|_{C_i^c}(-p_{ij})}) = 0$.  Since $h^0(E_i) = 0 $ too, this implies that $h^0(E) = 0$. 

\end{proof}

\begin{lemma}
\label{LEM:h1}
Let $L$ be a line bundle on $C$, let  $L_i$ be the restriction of $L$ to the component $C_i$ and $d_i= deg(L_i)$. Then
\begin{enumerate}
\item $L$ is ample if and only if $d_i >0$ for any $i$;
\item if $d_i\geq 2g_i$, then $h^1(L)=0$ and $L$ is globally generated; 
\item if $d_i\geq 2g_i+1$ then the restriction map $\rho_i:H^0(L)\to H^0(L_i)$ is surjective and $L$ is very ample.
\end{enumerate}
\end{lemma}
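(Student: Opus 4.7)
My plan is to prove the three items in the order (2), (3), (1), because (3) builds directly on (2) and (1) can be reduced to (3) by passing to a sufficiently high tensor power.

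For (2), I would induct on the number of components $\gamma$. The base case $\gamma=1$ is classical on a smooth curve. For $\gamma\geq 2$, the remark preceding the lemma produces a leaf $C_i$ (with $\delta_i=1$); let $p$ be the unique node on $C_i$. Tensoring $0\to \OO_{C_i}(-p)\to \OO_C\to \OO_{C_i^c}\to 0$ with $L$ gives
\begin{equation*}
0\to L_i(-p)\to L\to L|_{C_i^c}\to 0.
\end{equation*}
The inductive hypothesis applies to $L|_{C_i^c}$ on the compact-type curve $C_i^c$ (with $\gamma-1$ components, each of genus $\geq 2$), giving $h^1(L|_{C_i^c})=0$ and global generation; on the smooth $C_i$, $\deg L_i(-p)=d_i-1\geq 2g_i-1$, so $h^1(L_i(-p))=0$. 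The long exact sequence then forces $h^1(L)=0$ and the surjection $H^0(L)\twoheadrightarrow H^0(L|_{C_i^c})$. For global generation, I would identify $H^0(L)$ via Mayer--Vietoris with the subspace of $H^0(L_i)\oplus H^0(L|_{C_i^c})$ cut out by $s_1(p)=s_2(p)$. Given $q\in C_i$, I choose $s_1\in H^0(L_i)$ with $s_1(q)\neq 0$ (using $d_i\geq 2g_i$) and then $s_2\in H^0(L|_{C_i^c})$ with $s_2(p)=s_1(p)$, which is possible by the inductive global generation of $L|_{C_i^c}$; the case $q\in C_i^c$ is symmetric.

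For (3), tensoring \eqref{EXSEQ:CI} with $L$ yields $0\to L|_{C_i^c}(-\Delta_i)\to L\to L_i\to 0$, and the surjectivity of $\rho_i$ amounts to $h^1(L|_{C_i^c}(-\Delta_i))=0$. Each connected component of $C_i^c$ is itself of compact type, and on any such component $L|_{C_i^c}(-\Delta_i)$ has degree $d_j-1\geq 2g_j$ on components $C_j$ adjacent to $C_i$ (using $d_j\geq 2g_j+1$) and $d_k\geq 2g_k+1\geq 2g_k$ elsewhere, so part (2) applied componentwise gives the vanishing. For very ampleness, the surjectivity of $\rho_i$ combined with the very ampleness of each $L_i$ on $C_i$ (which holds since $d_i\geq 2g_i+1$) implies that $\phi_L$ restricts to a closed embedding on each $C_i$. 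To separate a point of $C_i$ from a point of another component, and to separate the two tangent directions at any node $p\in C_i\cap C_j$, I would use the inclusion $H^0(L|_{C_i^c}(-\Delta_i))\hookrightarrow H^0(L)$ of sections vanishing identically on $C_i$, together with the global generation of $L|_{C_i^c}(-\Delta_i)$ granted by (2), to produce sections of $L$ with prescribed non-vanishing behaviour on the relevant adjacent component at $p$.

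Finally, for (1), the implication $L$ ample $\Rightarrow d_i>0$ is immediate by restriction. Conversely, if all $d_i>0$, then for $n$ sufficiently large we have $nd_i\geq 2g_i+1$ for every $i$, so by (3) the line bundle $L^{\otimes n}$ is very ample and hence $L$ itself is ample. The main obstacle lies in the branch-separation at nodes in part (3): it requires carefully identifying which twisted sheaf is globally generated at the node and verifying that a section of $L|_{C_i^c}(-\Delta_i)$ non-vanishing at $p$ corresponds exactly to a section of $L$ whose restriction to the two branches vanishes to different orders at $p$; the rest of the proof is a clean application of the leaf-induction and the two distinguished short exact sequences \eqref{EXSEQ:CB}--\eqref{EXSEQ:CI}.
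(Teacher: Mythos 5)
Your proof is correct, but it follows a genuinely different route from the paper's for parts (1) and (2) and for the very-ampleness claim in (3). The paper proves (2) by invoking the numerical criterion of Catanese--Franciosi: a line bundle on a nodal curve is globally generated with vanishing $h^1$ as soon as $\deg(L|_B)\geq 2p_a(B)$ for \emph{every} subcurve $B$, and this inequality is then checked using the compact-type genus formula (separately for connected and disconnected $B$); likewise very ampleness in (3) is obtained by checking $\deg(L|_B)\geq 2p_a(B)+1$ and citing Catanese--Franciosi--Hulek--Reid, and (1) is quoted from Arbarello--Cornalba--Griffiths. Your leaf-induction for (2), with the Mayer--Vietoris description of $H^0(L)$ as pairs of sections agreeing at the separating node, is self-contained and avoids these external criteria; it buys transparency at the cost of working only for compact-type curves, whereas the subcurve-degree criterion applies to arbitrary nodal curves and disposes of all the separation issues at once. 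Your treatment of the surjectivity of $\rho_i$ coincides with the paper's (reduce to $h^1(L|_{C_i^c}(-\Delta_i))=0$ and note that the degree drops by at most one on each component of $C_i^c$), and your deduction of (1) from (3) via a high tensor power is a clean replacement for the citation. The one place where your argument is only sketched is the separation of tangent directions at a node in (3); the verification you flag does go through (a section of $L|_{C_i^c}(-\Delta_i)$ not vanishing at $p$ as a section of the twisted sheaf yields a section of $L$ vanishing identically on the $C_i$-branch and to order exactly one on the $C_j$-branch, and together with a section not vanishing at $p$ these span $L\otimes\mathcal{O}_{C,p}/\mathfrak{m}_p^2$, which suffices since every length-two subscheme at a node has ideal containing $\mathfrak{m}_p^2$), but you should write it out; the paper sidesteps this entirely by the degree inequality on subcurves.
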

\begin{proof}
(1) See \cite[Lemma 2.15]{ACG}.
\hfill\par
(2) By \cite[Lemma 2.1]{CF},  in order to have $h^1(L)=0$ and that $L$ is globally generated, it is enough to prove that for any subcurve $B$ of $C$ we have $\deg(L|_B)\geq 2p_a(B)$. Let $B$ be a subcurve of $C$ and assume that $B$ is connected. 
Then $B=\bigcup_{k=1}^{\gamma_B}{C_{i_k}}$ is a curve of compact type so $p_a(B)=\sum_{k=1}^{\gamma_B}g_{i_k}$ and $\deg(L|_B)=\sum_{k=1}^{\gamma_B}d_{i_k}$ by Lemma \ref{LEM:chi}. Since we are assuming $d_i\geq 2g_i$ for all $i$, we have
$$\deg(L|_B)=\sum_{k=1}^{\gamma_B}d_{i_k}\geq 2\sum_{k=1}^{\gamma_B}g_{i_k}=2p_a(B).$$

\noindent Assume now that $B$ is not connected. Then $B$ is the disjoint union $B=\bigsqcup_{k=1}^{c}{B_k}$ of  connected curves $B_1,\dots, B_c$ which are of compact type. It is easy to see that  
$$\deg(L|_{B})=\sum_{k=1}^c\deg(L|_{B_k})\qquad p_a(B)=\sum_{k=1}^{c}p_a(B_k)-(c-1).$$
Then, since we have $\deg(L|_{B_k})\geq 2p_a(B_k)$, as we just proved, we have
$$\deg(L|_B)=\sum_{k=1}^{c}\deg(L|_{B_k})\geq 2\sum_{k=1}^{c}p_a(B_k)>2p_a(B).$$

(3) If we tensor the exact sequence \ref{EXSEQ:CI} with $L$ and consider the long exact sequence in cohomology we have
$$0\to H^0(L|_{C_i^{c}}(-\Delta_i))\to H^0(L)\xrightarrow{\rho_i} H^0(L_i)\to H^1(L|_{C_i^{c}}(-\Delta_i))\to H^1(L).$$
By the previous point we have that $h^1(L)=0$ so the surjectivity of $\rho_i$ is equivalent to $h^1(L|_{C_i^{c}}(-\Delta_i))=0$. Denote by $C'$ the curve $C_i^{c}$ (which is a finite disjoint union of connected curves of compact type) and by $L'$ its line bundle $L|_{C_i^{c}}(-\Delta_i)$. Note that for any $j\neq i$ we have
$$d_j' = \deg(L'|_{C_j}) \geq d_j -1$$
since at most one of the points of $\Delta_i$ lies in $C_j$ (as each connected component of $C'$ is of compact type). Since, by assumption, we have $d_j'\geq 2g_j$, then $h^1(L|_{C_i^{c}}(-\Delta_i))=0$ by (2). This implies that  $\rho_i$ is surjective.

Finally, the very ampleness of $L$ follows from \cite{CFHR}: in fact, by using the same arguments of (2), one can prove that for any subcurve $B$ of $C$ we have $\deg(L|_B)\geq 2p_a(B) +1$. 
\end{proof}

\section{Coherent systems on nodal curves}
\label{cohsystem}
Let $C$ be a nodal reducible curve as in Section \ref{nodal curves}. In this section we will recall the notion of coherent systems\footnote{Note that the authors of \cite{KN} use the term {\it Brill-Noether pairs}.}  on  the curve $C$, for details see \cite{KN}. 
A {\it coherent system}  on the curve $C$ is  given by a pair $(E,V)$, where $E$ is a depth one sheaf on $C$ and $V$ is a subspace of $H^0(E)$.  A {\it coherent subsystem}  $(F,U)$ of $(E,V)$ is a coherent system which consists of a subsheaf $F \subseteq E$ and a subspace  $U\subseteq V \cap H^0(F)$. We say that $(F,U)$  is a {\it proper}  subsystem if $(F,U) \not= (0,0)$ and $(F,U) \not= (E,V)$. 
A coherent system $(E,V)$ is said of {\it type} $(r,d,k)$ if $\wrank(E) = r$, $\wdeg E = d$ and $\dim V = k$; 
if the multirank of $E$ is $\mrank(E) =(r_1,\cdots,r_{\gamma})$ then it is said of {\it multitype} $((r_1,\cdots,r_{\gamma}),d,k)$.

\begin{definition}
A family of coherent systems parametrized by a scheme $T$ is given by a  triplet 
$({\mathcal E}, {\mathcal V}, \xi)$ where
\begin{itemize}
\item ${\mathcal E}$ is a sheaf on $C \times T$ flat over $T$ such that for any $t \in T$ the sheaf  $E_t = {\mathcal E}_{\vert C \times t}$ is of depth one;
\item ${\mathcal V}$  is a locally free sheaf on $T$ whose fiber at $t$ is $V_t$;
\item $\xi \colon {\pi}^* {\mathcal V} \to {\mathcal E}$ is  a map of sheaves, where $\pi \colon C \times T \to T$ is the projection, and, for any $t \in T$, the map  
$$\xi_{ t} \colon V_t \otimes \OO_{C \times t} \to E_t,$$
induces an injective map $H^0(\xi_t) \colon V_t \to H^0(E_t)$.
\end{itemize}
Two families $({\mathcal E}, {\mathcal V}, {\xi})$ and $({\mathcal F}, {\mathcal U}, \eta)$ are isomorphic if and only if there exists an invertible sheaf ${\mathcal L}$ on $T$  such that 
${\mathcal F} \simeq  {\mathcal E} \otimes {\pi}^*{\mathcal L}$,
${\mathcal U} \simeq {\mathcal V} \otimes {\mathcal L}$ and 
$\eta = \xi \otimes {\pi}^*{\mathcal L}$.
\end{definition}

\begin{remark}
\label{families} \rm
Let $({\mathcal E},{\mathcal V},\xi)$ be a family of coherent systems parametrized by  a connected scheme $T$. Note that   the restriction 
${\mathcal E}|_{(C_i \setminus \Delta_i) \times T}$ is flat over $T$ too, so we have  that 
$$\rank({E_t}|_{C_i \setminus \Delta_i}) = r_i, \quad  \forall t \in T.$$ 
This implies that all coherent systems of the family have the same multitype.
Moreover, the set of $t \in T$ such that $E_t$ is locally free is an open subset of $T$. 
Finally, if ${\mathcal E}$ is  locally free, then ${\mathcal E}|_{C_i \times T}$ is flat over $T$ so it gives a family of vector bundles on the curve $C_i$ of rank $r_i$ and degree $d_i$. 
\end{remark}

We recall the notion of $\w$-slope for depth one sheaves on $C$ and  the definition of $\w$-stability. 

\begin{definition} Let $E$ be a depth one sheaf on $C$. For any polarization $\w$ on $C$  we define the 
$\w$-slope of $E$ as: 
$$\mu_{\w}(E)= \frac{\chi(E)}{\wrank(E)}=\chi(\OO_C)+\frac{\wdeg(E)}{\wrank(E)}.$$
$E$ is said $\w$-semistable (respectively $\w$-stable) if  for any proper subsheaf $F$  of $E$ we have $\mu_{\w}(F) \leq \mu_{\w}(E)$ (respectively $<$).
\end{definition}

The notion of $\wa$-slope and $\wa$-stability for coherent systems is defined as follows. 

\begin{definition} Let $(E,V)$ be a coherent system of type 
$(\wrank(E),\wdeg(E),k)$ on the curve $C$.  
For any positive $\alpha \in \R$ and for any polarization ${\underline w}$ on the curve $C$,  we define the  
$\wa$-slope of $(E,V)$:
$$\mu_{{\underline w},\alpha} (E,V) = \frac{\wdeg(E)}{\wrank(E)} + \alpha \frac{k}{\wrank(E)}=\frac{\chi(E)}{\wrank(E)}- \chi(\OO_C)+\alpha\frac{k}{\wrank(E)}.$$
\end{definition}

\begin{definition}
A coherent system $(E,V)$ is said $\wa$-semistable (resp. stable) if for any proper coherent subsystem $(F,U)$ we have:
$$ \mu_{{\underline w},\alpha} (F,U) \leq \mu_{{\underline w},\alpha} (E,V)  \ \  (resp. <).$$
\end{definition}

Fix $(r,d,k)$ with $r,d\in\mathbb{R}$, $r>0$, $k\in \mathbb{N}$ and $\alpha \in {\mathbb Q}$ positive. In \cite{KN} it is proved that there exists a projective scheme 
${\tilde{\mathcal G}}_{\underline{w},\alpha}(r,d,k)$  which is a coarse moduli space for families of
$\wa$-semistable coherent systems of type
$(r,d,k)$ on the curve $C$. 
Moreover, let  $\Gwa(r,d,k)$ denote the subscheme parametrizing $\wa$-stable coherent systems, it is an open subscheme of  $\tilde{\mathcal{G}}_{\underline{w},\alpha}(r,d,k)$. 
As $C$ is a  reducible curve, these spaces are reducible too, different components correspond to  possible  multiranks $(r_1,..,r_{\gamma})$, see Remark \ref{families}.  We are interested in those components containing coherent systems arising from locally free sheaves of rank $r$. 
With this aim, we set  $\underline{r}=(r,\dots,r)$ and let   $\Gwa(\underline{r},d,k)$ denote the subscheme of $\Gwa(r,d,k)$ parametrizing families of $({\underline w},\alpha)$-stable coherent systems of multitype $(\underline{r},d,k)$. 
We will denote by 
$$\Gwa'(\underline{r},d,k) \subset \Gwa(\underline{r},d,k)$$
the open subset corresponding to $\wa$-stable coherent systems $(E,V)$ with $E$ locally free. 
 
We have the following  fundamental result:
\begin{theorem}
Let $(E,V)$ be a coherent system which is $\wa$-stable and let $\Lambda \in \Gwa(r,d,k)$ be the corresponding point.
\begin{enumerate}
\item The Zariski tangent space  of $\Gwa(r,d,k)$ at the point $\Lambda$ is isomorphic to $\Ext^1(\Lambda,\Lambda)$;
\item if $\Ext^2(\Lambda,\Lambda) = 0$, then $\Gwa(r,d,k)$ is smooth of dimension $\dim \Ext^1(\Lambda,\Lambda)$ at the point $\Lambda$;
\item for every irreducible component $S$ of $\Gwa(r,d,k)$ through $\Lambda$ we have: 
$$ \dim \Ext^1(\Lambda,\Lambda) - \dim \Ext^2(\Lambda,\Lambda) \leq \dim S \leq \dim \Ext^1(\Lambda,\Lambda).$$
\end{enumerate}
\end{theorem}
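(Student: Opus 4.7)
The plan is to invoke the standard deformation-theoretic framework for coherent systems, as developed by He and Bradlow--Daskalopoulos in the smooth case and adapted to the nodal setting in \cite{KN}. First I would encode a coherent system $\Lambda=(E,V)$ as a two-term complex
$$K^{\bullet}_{\Lambda}\colon V \otimes \OO_C \xrightarrow{\ev} E$$
placed in degrees $0,1$, so that morphisms of coherent systems correspond to morphisms of complexes. The relevant groups $\Ext^i(\Lambda_1,\Lambda_2)$ are then defined as the hyper-$\Ext$ of these complexes, and they fit into a long exact sequence linking $\Hom(V_1,V_2)$, $\Hom(E_1,E_2)$, $\Hom(V_1,H^0(E_2))$, $\Ext^1(E_1,E_2)$, and so on. This reduction to a complex is the essential ingredient that lets the standard deformation-theoretic machinery (tangent space $=\Ext^1$, obstructions in $\Ext^2$) apply simultaneously to the sheaf $E$ and the subspace $V$.

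For (1) I would show that first-order deformations of $\Lambda$ over $\Spec \C[\epsilon]/(\epsilon^2)$ are in natural bijection with $\Ext^1(\Lambda,\Lambda)$, by associating to a deformation $\widetilde{\Lambda}$ the extension $0\to \Lambda \to \widetilde{\Lambda} \to \Lambda \to 0$ in the abelian category of coherent systems, and conversely reconstructing a flat deformation from such an extension class. Flatness, compatibility with the evaluation map, and preservation of $\wa$-stability (which is an open condition) are what make this correspondence identify $\Ext^1(\Lambda,\Lambda)$ with the Zariski tangent space of $\Gwa(r,d,k)$ at $\Lambda$.

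For (2) and (3), I would build the Kuranishi-type local model. Given a first-order deformation, the obstruction to lifting inductively from $\Spec\C[t]/(t^n)$ to $\Spec\C[t]/(t^{n+1})$ defines a class in $\Ext^2(\Lambda,\Lambda)$. Standard formal-geometry arguments then realize the germ of $\Gwa(r,d,k)$ at $\Lambda$ as the zero locus of an analytic map
$$\kappa\colon (\Ext^1(\Lambda,\Lambda),0)\to (\Ext^2(\Lambda,\Lambda),0)$$
with $d\kappa(0)=0$. If $\Ext^2(\Lambda,\Lambda)=0$, then $\kappa\equiv 0$ and the germ is smooth of dimension $\dim\Ext^1(\Lambda,\Lambda)$, giving (2). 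In general, any irreducible component $S$ through $\Lambda$ sits inside this germ, hence
$$\dim \Ext^1(\Lambda,\Lambda) - \dim \Ext^2(\Lambda,\Lambda) \leq \dim S \leq \dim \Ext^1(\Lambda,\Lambda),$$
since locally $S$ is cut out by at most $\dim\Ext^2(\Lambda,\Lambda)$ equations inside a smooth space of dimension $\dim\Ext^1(\Lambda,\Lambda)$, yielding (3).

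The main obstacle is verifying that the deformation and obstruction theory for coherent systems on a smooth curve transfers cleanly to the nodal curve of compact type $C$. Unlike the smooth case, depth-one sheaves on $C$ can carry nonzero higher $\Ext$, and $E$ is not locally free at the nodes in general, so one must check that the resolutions used to compute $\Ext^i(\Lambda,\Lambda)$ behave well, and that the identification of obstructions with $\Ext^2$-classes survives. The construction of the moduli space itself in \cite{KN} is carried out via GIT on a suitable Quot-scheme, and the computations of tangent and obstruction spaces reduce to the same formal arguments once the hyper-$\Ext$ groups above are set up correctly; this is what I would cite (or reproduce) at the end.
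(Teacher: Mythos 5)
Your outline is correct and follows exactly the route the paper takes: the paper gives no independent proof of this theorem, but simply cites He's deformation-theoretic machinery for coherent systems (the two-term complex, hyper-$\Ext$ groups, tangent space $=\Ext^1$, obstructions in $\Ext^2$, and the resulting Kuranishi-type dimension bounds) and observes that this machinery carries over verbatim to arbitrary reduced nodal curves, as also noted by Bhosle in the irreducible case. Your closing paragraph, which flags the transfer to the nodal setting as the point requiring verification and proposes to cite or reproduce the relevant computations, is precisely the content of the paper's remark following the theorem.
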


This theorem has been proved in the case of smooth curves in \cite{He}. Actually, the machinery  introduced by the author  in order to prove the result also works for arbitrary reduced nodal curves. This has been also noted in \cite{Bho} (where the author is interested in the irreducible case).

When  $r,k\in \mathbb{N}$ and $d\in \mathbb{Z}$, as in the smooth case, we can define the {\it Brill-Noether number} :
\begin{equation}
\beta_C(r,d,k) = r^2(p_a(C) -1) + 1 - k(k-d +r(p_a(C)-1)).
\end{equation}
If $\Lambda \in \Gwa(r,d,k)$ corresponds to a coherent system $(E,V)$  with $E$ locally free, then we can define the {\it Petri map} $\mu_{E,V}$ of $(E,V)$ as follows:
\begin{equation}
\label{Petri map}
\mu_{E,V} \colon V \otimes H^0({\omega}_C \otimes E^*) \to H^0({\omega}_C \otimes E \otimes E^*)
\end{equation}
which is given by multiplication of global sections. 
For coherent systems $(E,V)$ with $E$ locally free, we have the following result:
\begin{proposition}
\label{Petritheorem}
Let $\Lambda \in \Gwa(r,d,k)$ corresponding to a coherent system $(E,V)$ with $E$ locally free. Then, if the Petri map of $(E,V)$ is injective, $\Lambda$ is a smooth point of $\Gwa(r,d,k)$ and the dimension of  $\Gwa(r,d,k)$ at $\Lambda$ is given by the Brill-Noether number. 
\end{proposition}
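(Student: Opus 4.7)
The plan is to reduce this to part (2) of the preceding theorem by exhibiting $\Ext^2(\Lambda,\Lambda)$ as the dual of $\ker(\mu_{E,V})$, and then identifying $\dim\Ext^1(\Lambda,\Lambda)$ with the Brill--Noether number via an Euler characteristic computation.

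First, I would recall the hypercohomology machinery used by He (and, as noted in the text, extended to reduced nodal curves as in \cite{Bho}): the coherent system $\Lambda=(E,V)$ is represented by the two-term complex $[V\otimes \OO_C \xrightarrow{\ev} E]$, and $\Ext^i(\Lambda,\Lambda)$ is computed as the hypercohomology of the associated $\mathcal{H}om$-complex. Since $C$ is nodal, it is Gorenstein, so $\omega_C$ is invertible and Serre duality holds in the usual form (this is exactly the feature that makes the smooth-curve argument port over without modification). Applying the spectral sequence, and using that $E$ is locally free so that $\Ext^i(E,E)=H^i(E\otimes E^*)$, produces the long exact sequence
\begin{equation*}
0\to\Hom(\Lambda,\Lambda)\to\Hom(E,E)\to\Hom(V,H^0(E)/V)\to\Ext^1(\Lambda,\Lambda)\to\Ext^1(E,E)\xrightarrow{\,\psi\,} V^*\otimes H^1(E)\to\Ext^2(\Lambda,\Lambda)\to 0.
\end{equation*}

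Next, I would identify the map $\psi$. By Serre duality on $C$, $\Ext^1(E,E)\cong H^0(\omega_C\otimes E\otimes E^*)^*$ and $H^1(E)\cong H^0(\omega_C\otimes E^*)^*$, so the dual of $\psi$ is a map $V\otimes H^0(\omega_C\otimes E^*)\to H^0(\omega_C\otimes E\otimes E^*)$; tracing through the definition shows this dual map is exactly the Petri map $\mu_{E,V}$ of \eqref{Petri map}. Consequently $\Ext^2(\Lambda,\Lambda)\cong \ker(\mu_{E,V})^*$. The hypothesis that $\mu_{E,V}$ is injective therefore forces $\Ext^2(\Lambda,\Lambda)=0$, and the smoothness of $\Gwa(r,d,k)$ at $\Lambda$ now follows from part (2) of the preceding theorem.

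Finally, I would compute the dimension. The $\wa$-stability of $\Lambda$ gives $\Hom(\Lambda,\Lambda)=\C$, so $\dim\Ext^0(\Lambda,\Lambda)=1$; and $\Ext^2(\Lambda,\Lambda)=0$ by the previous step. Taking the alternating sum of dimensions in the long exact sequence above yields
\begin{equation*}
1-\dim\Ext^1(\Lambda,\Lambda)=\chi(E\otimes E^*)-k\cdot h^0(E)+k^2+k\cdot h^1(E)=\chi(E\otimes E^*)-k\,\chi(E)+k^2.
\end{equation*}
Since $E$ is locally free of rank $r$ with $\wdeg(E)=d$, Lemma \ref{LEM:chi}(3) together with Riemann--Roch on $C$ gives $\chi(E\otimes E^*)=r^2(1-p_a(C))$ and $\chi(E)=d+r(1-p_a(C))$. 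Substituting and simplifying, the right-hand side equals $1-\beta_C(r,d,k)$, so $\dim\Ext^1(\Lambda,\Lambda)=\beta_C(r,d,k)$, completing the proof.

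The only real obstacle is justifying the long exact sequence in the nodal setting; this is exactly the point where one needs Gorensteinness of $C$ so that Serre duality applies, and it is precisely the extension already carried out in \cite{He} and \cite{Bho}, so I would cite rather than rederive it.
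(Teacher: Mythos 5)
Your proof is correct and follows exactly the route the paper relies on: the paper's ``proof'' is a citation of \cite[Proposition 3.10]{BGMN} and \cite[Proposition 3.7]{Bho} together with the remark that He's hypercohomology machinery works for reduced nodal (Gorenstein) curves, and what you have written out --- the long exact sequence relating $\Ext^i(\Lambda,\Lambda)$ to $\Ext^i(E,E)$ and $V^*\otimes H^i(E)$, the Serre-duality identification $\Ext^2(\Lambda,\Lambda)\cong\ker(\mu_{E,V})^*$, and the Euler-characteristic count giving $\dim\Ext^1(\Lambda,\Lambda)=\beta_C(r,d,k)$ --- is precisely the content of that cited argument. Your numerical bookkeeping also checks out against the paper's conventions ($\chi(E)=d+r(1-p_a(C))$ via Lemma \ref{LEM:chi}(3)).
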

This result has been proved for smooth curves in \cite[Proposition 3.10]{BGMN} and has been generalized to a nodal irreducible curve in \cite[Proposition 3.7]{Bho}. Actually, as previously noted, the arguments involved in the proof of this proposition still works for a reduced nodal curve too. 

\section{Generated coherent systems on nodal curves}
\label{generated}
Let $C$ be a nodal reducible curve as in  Section \ref{nodal curves} with $\gamma$ components.

\begin{definition}
A coherent system $(E,V)$ on  $C$ of type $(r,d,k)$ is said {\it generated} if the evaluation map of global sections 
$$ev_V \colon V \otimes O_C \to E$$
is surjective. It is said {\it generically generated} if either it is generated  or $\coker ev_V$  is a sheaf  whose support is $0$-dimensional. 
\end{definition}

Assume that $(E,V)$ is a coherent system on $C$. For any  connected subcurve $B$ of $C$  we can define the restriction of $(E,V)$ to $B$  as follows. From the  exact sequence
$$0\to \OO_{C-B}(-\Delta_B) \to \OO_C \to \OO_B \to 0$$
by tensoring with $E$,  we have a surjective map too $E \to E \otimes \OO_B$ which is actually  the restriction map.  
Then, if we set    $E_B=E\otimes {\OO_B} / torsion$, we have 
a surjective map too  $ E \to E_{B}$
which induces the following  map of global sections:
$$ \rho_B \colon H^0(E)\to H^0(E_B).$$
We define $V_B$ as the  image of $V$ by the map   $\rho_B$. 
Then $(E_B,V_B)$ is a coherent system on the subcurve $B$.  Notice that when  $E$ is a vector bundle then $E_B=E\otimes \OO_B$ and it is a vector bundle too on $B$.
\begin{definition}
We will call $(E_B,V_B)$  the restriction of $(E,V)$ to the subcurve $B$. When $B=C_i$, to simplify  notations, we will denote it by $(E_i,V_i)$.  
\end{definition}

\begin{lemma}
\label{LEM:GLOBGEN}
Let  $(E,V)$ be  a generated (respectively generically generated) coherent system on $C$. If $B$ is a connected subcurve of $C$, then $(E_B,V_B)$ is generated  (respectively generically generated) too. 

\end{lemma}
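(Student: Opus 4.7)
The plan is to exploit the functoriality of the evaluation map under restriction to the subcurve $B$. I would begin by exhibiting the commutative square
$$
\xymatrix{
V\otimes\OO_C \ar[r]^-{ev_V}\ar[d] & E\ar[d]\\
V_B\otimes\OO_B \ar[r]^-{ev_{V_B}} & E_B
}
$$
in which the right vertical arrow is the canonical surjection $E\twoheadrightarrow E\otimes\OO_B\twoheadrightarrow E_B$ (the first map arising from $\OO_C\twoheadrightarrow\OO_B$ in \eqref{EXSEQ:CB}, the second being the projection onto the quotient by torsion), and the left vertical arrow is the tensor product of the surjection $V\twoheadrightarrow V_B=\rho_B(V)$ with $\OO_C\twoheadrightarrow\OO_B$, hence itself surjective. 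Commutativity is straightforward to check on pure tensors: $v\otimes 1$ is sent along both paths to the image in $E_B$ of the global section $v\in H^0(E)$, which is $\rho_B(v)\in V_B\subseteq H^0(E_B)$, precisely by the definition of $\rho_B$.

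Once this diagram is in place, both conclusions are formal. For the generated case, if $ev_V$ is surjective then the composition $V\otimes\OO_C\to E\to E_B$ is a composition of surjections, hence surjective; since this composition factors through $V_B\otimes\OO_B$, the arrow $ev_{V_B}$ at the bottom is forced to be surjective too. For the generically generated case, the right vertical surjection sends $\Img(ev_V)$ onto the image of the composition $V\otimes\OO_C\to E_B$, inducing a surjection
$$\coker(ev_V)\twoheadrightarrow \coker(V\otimes\OO_C\to E_B).$$
Because the left vertical arrow is surjective, the target here coincides with $\coker(ev_{V_B})$. Consequently $\coker(ev_{V_B})$ is a quotient of $\coker(ev_V)$, and since the latter has zero-dimensional support by hypothesis, so does the former.

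I do not expect any serious obstacle: the argument is essentially a diagram chase combined with the right-exactness of the tensor product. The only point requiring a moment of care is checking commutativity of the square as a statement about sheaves rather than global sections, but this reduces to a local computation at points of $B$, since $\rho_B$ is by definition the map induced on global sections by the sheaf surjection $E\twoheadrightarrow E_B$.
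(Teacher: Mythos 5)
Your proof is correct and follows essentially the same route as the paper: the identical commutative square with surjective vertical arrows, with the generated case handled by composing surjections and the generically generated case by the induced surjection $\coker(ev_V)\twoheadrightarrow\coker(ev_{V_B})$. The paper simply records that cokernel surjection directly in a three-column diagram, whereas you derive it explicitly; there is no substantive difference.
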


\begin{proof}
Consider the evaluation map  $ev_V:V\otimes \OO_C\to E$. As both  map $V\to V_B$ and $\OO_C\to \OO_B$ are restriction maps 
we have a commutative diagram
$$
\xymatrix{
V\otimes \OO_C \ar[r]^-{ev_V} \ar@{->>}[d] & E \ar@{->>}[r] \ar@{->>}[d] & \coker ev_V \ar@{->>}[d] \\
V_B\otimes \OO_B \ar[r]^-{ev_{V_B}} & E_B \ar@{->>}[r] & \coker ev_{V_B} 
}
$$
where vertical maps are surjective. 
If $(E,V)$ is generated, then $ev_V$ is surjective and  $ev_{V_B}$ is surjective too, so 
 $(E_B,V_B)$ is generated.
If $(E,V)$ is generically generated (but not generated),  by the above diagram, $\dim \Supp(\coker e_V) = 0$ implies  $\dim \Supp(\coker e_{V_B}) = 0$ too.
\end{proof}

From now on, we will restrict our attention to coherent systems on $C$  of  multitype $(\underline{r},d,k)$, where we set  ${\underline r} = (r,\cdots,r)$.

\begin{remark}
\label{REM:gen}
Let $(E,V)$ be a coherent system of multitype $(\underline{r},d,k)$. If it is generically generated, then by Lemma 
\ref{LEM:GLOBGEN},
$(E_i,V_i)$ is a generically generated coherent system  on the curve $C_i$ with  $d_i \geq 0$ and $\dim V_i \geq r$. Hence, by  Lemma \ref{LEM:chi}, we also have  $d\geq 0$   and $k \geq r$,  as $\dim V \geq \dim V_i$. 
\end{remark}

The following property  is a generalization of  \cite[Proposition 4.4]{BGMN}  and  of \cite[Corollary 3.15]{Bho}.

\begin{proposition}
\label{PROPO:gen}
Let $C$ be  a nodal curve as in Section \ref{nodal curves} and  let  ${\underline w}$ be  a polarization on it. Fix integers $r \geq 1$, $d \geq 0$ and $k \geq r$. There exists $\alpha_g \in {\mathbb R}$ such that, for any $\alpha  \geq  \alpha_g$, any coherent system $(E,V)$ of type $(\underline{r},d,k)$,  which is $\wa$-semistable,  is generically generated and the kernel of the evaluation map $ev_V$ has no global sections.  
\end{proposition}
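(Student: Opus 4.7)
Let $(E,V)$ be an $\wa$-semistable coherent system of type $(\underline{r},d,k)$ and set $E' := \Img(ev_V) \subseteq E$. Since $ev_V$ factors through $E'$, the inclusion $V \hookrightarrow H^0(E)$ lands in $H^0(E')$, so $(E',V)$ is a coherent subsystem of $(E,V)$. Writing $r_i' = \rank(E'_i) \leq r$, I want to show that for $\alpha$ sufficiently large the $\wa$-semistability inequality forces $r'_i = r$ for every $i$; this is equivalent to $E/E'$ having $0$-dimensional support, i.e.\ to generic generation of $(E,V)$.

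The key preparatory step is a uniform lower bound on $\wdeg(E')$. Tensoring the surjection $V \otimes \OO_C \twoheadrightarrow E'$ with $\OO_{C_i}$ and then modding out the torsion yields a surjection $V \otimes \OO_{C_i} \twoheadrightarrow E'_i$, so each $E'_i$ is globally generated on the smooth curve $C_i$. By the classical argument (choose $r'_i$ generically independent sections to obtain $\OO_{C_i}^{r'_i} \hookrightarrow E'_i$ with torsion quotient of length $\deg(E'_i)$), one has $\deg(E'_i) \geq 0$ whenever $r'_i \geq 1$. Using the exact sequence $0 \to E' \to \bigoplus_i E'_i \to T \to 0$ with $T$ a torsion sheaf supported on the nodes and of length at most $r(\gamma - 1)$, the same calculation as in Lemma \ref{LEM:chi} then produces a bound $\wdeg(E') \geq M$ for some $M = M(r, \w, p_a(C))$ that depends only on the fixed data.

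Now the $\wa$-semistability inequality $\mu_{\wa}(E',V) \leq \mu_{\wa}(E,V)$ rearranges as
$$
\alpha k \left(\frac{1}{\wrank(E')} - \frac{1}{r}\right) \leq \frac{d}{r} - \frac{\wdeg(E')}{\wrank(E')}.
$$
Since $V \neq 0$, we have $E' \neq 0$ and hence $\wrank(E') \geq \min_i w_i > 0$; combined with $\wdeg(E') \geq M$ this gives an upper bound $C = C(r,d,k,\w,p_a(C))$ on the right-hand side. If some $r'_i < r$, then necessarily $\wrank(E') \leq r - \min_i w_i$, so the coefficient of $\alpha$ on the left-hand side is at least a positive constant independent of $(E,V)$. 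Choosing $\alpha_g$ so that the resulting linear function of $\alpha$ exceeds $C$ for all $\alpha > \alpha_g$ yields a contradiction, hence $r'_i = r$ for every $i$.

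The second claim is automatic: a global section of $\Ker(ev_V) \subseteq V \otimes \OO_C$ is just a $v \in V = H^0(V \otimes \OO_C)$ whose image in $H^0(E)$ vanishes, and $V \hookrightarrow H^0(E)$ holds by the very definition of a coherent system. The only genuine obstacle in the argument above is the multirank bookkeeping: Lemma \ref{LEM:chi}(2) is stated for uniform multirank, so one must rerun its proof allowing a possibly non-uniform multirank $(r'_1, \ldots, r'_\gamma)$ on $E'$ and carefully track how $\wrank(E')$ and $\wdeg(E')$ are controlled by the individual $r'_i$ and $\deg(E'_i) \geq 0$.
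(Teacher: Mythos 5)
Your argument is correct and is essentially the paper's own proof: both take $G=\Img(ev_V)$ with the full space $V$ as the destabilizing subsystem, bound $r-\wrank(G)$ below by a positive constant coming from the polarization weights, and bound $\wdeg(G)$ below via the generation of the restrictions $G_i$ (so $\deg(G_i)\geq 0$) together with the torsion sequence $0\to G\to\bigoplus_i G_i\to T\to 0$ with $l(T)\leq r(\gamma-1)$, which is exactly the content of Lemma \ref{LEM:chi}(1) and handles the non-uniform multirank you worried about. The second claim is likewise handled the same way, via the injectivity of $V\to H^0(E)$.
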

\begin{proof}
Let $(E,V)$ be a coherent system  which is $\wa$-semistable.
Assume that   $(E,V)$ is  not generically generated. We denote by $G$ the image of the  evaluation map $ev_V$. Then $G$ is a depth one subsheaf of $E$,
 with $\Rk(G_i) = r_i \leq r$  for any $i$,  which  fits into the exact sequence
$$ 0 \to G \to E \to \coker ev_V \to 0$$
and $\dim \Supp(\coker ev_V) = 1$. 
There exists a component $C_j$ such that $C_j\subseteq \Supp(\coker ev_V)$,  so 
we have $\dim \Supp(\coker ev_{V_j}) = 1$ too and   $r_j<r$. This implies  $\wrank(G)<\wrank(E) = r$.
Consider the coherent system $(G,V)$,  it  is a proper subsystem of $(E,V)$ and it is also generated by construction. From the $\wa$-semistability of $(E,V)$ we have
$$\mu_{\wa}(G,V) \leq \mu_{\wa}(E,V),$$
equivalently
$$\frac{\wdeg(G)}{\wrank(G)} + \alpha \frac{k}{\rank(G)} \leq 
\frac{d}{r} + \alpha \frac{k}{r}\qquad \mbox{i.e.}\qquad \alpha k \left(\frac{1}{\wrank(G)}-\frac{1}{r}\right) \leq
\frac{d}{r}-\frac{\wdeg(G)}{\wrank(G)}.$$
Note that the coefficient of $\alpha$ is positive, since $\wrank(G)<r$, so we can write $$\alpha\leq \frac{d\wrank(G)-r\wdeg(G)}{k(r-\wrank(G))}.$$
We recall that for any $i$ we have,  see Section \ref{nodal curves}, that $w_i = \frac{a_i}{\sum_{m=1}^{\gamma} a_m}$, with $a_m \geq 1$.
Note that, since $r_j < r$, we have  
$$r-\wrank(G)=\sum_{i=1}^{\gamma}w_i(r-r_i)\geq w_j(r-r_j)\geq w_j = \frac{a_j}{\sum_{m=1}^{\gamma}a_m},$$
so
$$ \alpha \leq \frac{\sum_{m=1}^{\gamma}a_m}{ka_j}(d \wrank(G) - r \wdeg(G)).$$
 By Lemma \ref{LEM:chi}, we have
$$\wdeg(G) = \chi(G) -\wrank(G) \chi(\OO_C)  \geq \sum_{i=1}^{\gamma}
\chi(G_i) -r_M\delta - (1 - p_a) \sum_{i=1}^{\gamma}w_ir_i $$
and we obtain:
$$ \alpha \leq \frac{\sum_{m=1}^{\gamma}a_m}{ka_j}\left( d \sum_{i=1}^{\gamma}w_ir_i - r\sum_{i=1}^{\gamma}\deg(G_i) +r \sum_{i=1}^{\gamma}r_i(g_i-1) +rr_{M}\delta -r(p_a-1) \sum_{i=1}^{\gamma}w_ir_i\right).$$
As $(G_i,V_i)$ is generated $\deg(G_i) \geq 0$.  Since  $r_i \leq r$ and $r_j < r$ we have:  $r_{M} \leq r$ and  $\sum_{i=1}^{\gamma}w_ir_i < r$. Finally,  $a_j \geq 1$ by construction,  so the above inequality become: 
$$\alpha < \frac{\sum_{m=1}^{\gamma}a_m}{k}(d r  +r^2 (p_a(C)-1) )= \alpha_g.$$
Hence the first claim is proved. 

Let $(E,V)$ be generically generated. Then we have the exact sequence
$$ 0 \to \Ker ev_V \to V \otimes O_C \to E \to \coker ev_V \to 0,$$
where  $\coker ev_V$ has $0$-dimensional support. 
Since $H^0(ev_V) \colon V \to H^0(E)$ is injective, then we have
$H^0(\Ker ev_V) = 0$.
\end{proof}

Note that in the proof of the above Theorem we have defined
\begin{equation}
\label{DEF:ALPHAG}
\alpha_g = \frac{\sum_{m=1}^{\gamma}a_m}{k}(d r  +r^2 (p_a(C)-1)).
\end{equation}
This number depends only on the arithmetic genus of 
$C$, the polarization $\w$ and the multitype $(\underline{r},d,k)$.

The following property  generalizes 
\cite[Proposition  4.5 (i)]{BGMN}.  

\begin{proposition}
\label{PROP:stronglyunstable}
Let $C$ be  a nodal curve as in Section \ref{nodal curves} and  let  ${\underline w}$ be  a polarization on it. 
Let $(E,V)$ be a generically generated coherent system of multitype  $(\underline{r},d,k)$. 
 If there exists a proper subsystem $(F,U)$ of $(E,V)$ 
 such that 
 \begin{equation}
 \label{ineq:strong}
 \frac{\dim U}{\wrank(F)} > \frac{\dim V}{\wrank(E)},
 \end{equation}
 then $(E,V)$ is not $\wa$-semistable  for $\alpha > k\alpha_g$.  
\end{proposition}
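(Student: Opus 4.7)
The plan is to exhibit a proper coherent subsystem of $(E,V)$ whose $\wa$-slope exceeds that of $(E,V)$ as soon as $\alpha > k\alpha_g$. Starting from the given $(F,U)$, I replace $F$ by the subsheaf $G \subseteq F$ generated by the global sections in $U$, so that $(G,U)$ is a generated coherent subsystem of $(E,V)$. Since $G \subseteq F$, we have $\wrank G \leq \wrank F$, so the strict inequality (\ref{ineq:strong}) is preserved (indeed strengthened) for $(G,U)$. Moreover $(G,U)$ is proper: if $F \subsetneq E$ then $G \subsetneq E$; otherwise $U \subsetneq V$ and so $(G,U) \neq (E,V)$. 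Finally, by Lemma \ref{LEM:GLOBGEN}, each restriction $(G_i, U_i)$ is generated on the smooth component $C_i$, whence a generic-sections argument gives $\deg G_i \geq 0$ (either $r_i(G) = 0$, or $r_i(G)$ generic sections in $U_i$ define an injection $\OO_{C_i}^{r_i(G)} \hookrightarrow G_i$ with torsion cokernel).

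Writing $u = \dim U$ and $k = \dim V$, the condition $\mu_{\wa}(G,U) > \mu_{\wa}(E,V)$, after multiplying through by the positive quantity $r \cdot \wrank G$, rewrites as
$$\alpha\bigl(ur - k\wrank G\bigr) > d\wrank G - r\wdeg G.$$
By (\ref{ineq:strong}) the coefficient of $\alpha$ is strictly positive, so it suffices to bound it from below and the right-hand side from above. For the lower bound, set $A := \sum_m a_m$; both $Aur$ and $kA\wrank G = k\sum_i a_i r_i(G)$ are integers, so $A(ur - k\wrank G)$ is a positive integer, whence $ur - k\wrank G \geq 1/A$. For the upper bound, $\wrank G \leq r$ gives $d\wrank G \leq dr$, and combining $\deg G_i \geq 0$ with Lemma \ref{LEM:chi}(1), together with $\gamma - 1 = \delta$, yields
$$\chi(G) \geq \sum_i \chi(G_i) - r_M\delta \geq -r_M\bigl(p_a(C) - 1\bigr),$$
where $r_M = \max_i r_i(G) \leq r$. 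Since $\wdeg G = \chi(G) + \wrank G(p_a(C) - 1)$ and $\wrank G \leq r_M \leq r$, this gives $\wdeg G \geq r(1 - p_a(C))$, hence $-r\wdeg G \leq r^2(p_a(C) - 1)$.

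Putting these together,
$$\frac{d\wrank G - r\wdeg G}{ur - k\wrank G} \leq A\bigl(dr + r^2(p_a(C) - 1)\bigr) = k\alpha_g,$$
so $(G,U)$ strictly destabilizes $(E,V)$ for every $\alpha > k\alpha_g$, proving the proposition. The main technical obstacle is converting the componentwise bound $\deg G_i \geq 0$ into a global lower bound on $\wdeg G$: one must carefully track the torsion contribution at the nodes through Lemma \ref{LEM:chi}(1), and the resulting $r^2(p_a - 1)$ correction is precisely what is absorbed into the definition of $\alpha_g$, so the estimate must be carried out sharply to match the stated threshold.
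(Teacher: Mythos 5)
Your proof is correct and follows essentially the same route as the paper: pass to the generated subsystem $(\Img ev_U,U)$, bound the denominator $ur-k\wrank(G)$ below by $1/\sum_m a_m$ via integrality of $\left(\sum_m a_m\right)\wrank(G)$ (the paper phrases this with fractional parts), and bound the numerator using $\deg G_i\geq 0$ together with Lemma \ref{LEM:chi}, arriving at the same threshold $k\alpha_g$; the only cosmetic difference is that you argue directly that $(G,U)$ destabilizes for $\alpha>k\alpha_g$, while the paper argues contrapositively from assumed semistability.
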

\begin{proof}
Let $(E,V)$ be a generically generated coherent system which is $\wa$-semistable. 
Assume that there exists a proper subsystem  $(F,U)$  of $(E,V)$ satisfying \ref{ineq:strong}. Since $U \subseteq V$, then we have $\wrank(F) < \wrank(E)= r$.
We can assume that $(F,U)$ is generated, otherwise we can consider the subsystem 
$(\Img ev_U, U)$ of $(E,V)$ which satisfies \ref{ineq:strong} too.
\hfill\par
Let $h = \dim U$. Since $(E,V)$ is $\wa$-semistable we have
$$\frac{\wdeg(F)}{\wrank(F)}+ \alpha \frac{h}{\wrank(F)} \leq  \frac{d}{r}+\alpha\frac{k}{r}.$$
This is equivalent to:
$$\alpha \left( \frac{h}{\wrank(F)}-\frac{k}{r}\right) \leq \frac{d}{r} - \frac{\wdeg(F)}{\wrank(F)}.$$
Then by \ref{ineq:strong} we  get
$$
\alpha \leq \frac{d\wrank(F) - r \wdeg(F)}{
hr - k\wrank(F)}.
$$
Note that   $k\wrank(F)$ is a rational number,  we denote by  $\lfloor k\wrank(F)\rfloor$ its integral part and by $\{k\wrank(F)\}$ its fractional part. 
By \ref{ineq:strong} we get $hr \geq \lfloor k\wrank(F)\rfloor +1$, which  implies that:
$$ hr - k\wrank(F) \geq 1 - \{k\wrank(F)\}.$$
We recall that for any $i$ we have,  see Section \ref{nodal curves}, that $w_i = \frac{a_i}{\sum_{m=1}^{\gamma} a_m}$, with $a_m \geq 1$. 
So we have $\{k\wrank(F)\} = \frac{b}{\sum_{m=1}^{\gamma}a_m}$ with 
$0 \leq b \leq \sum_{m=1}^{\gamma}a_m -1$.
This allows us to prove the bound
$$hr - k\wrank(F) \geq \frac{1}{\sum_{m=1}^{\gamma}a_m}.$$
Hence we obtain
$$ \alpha \leq  \left(\sum_{m=1}^{\gamma}a_m\right) (d \wrank(F) - r \wdeg(F)).$$
Since $(F,U)$ is generated we can proceed as in the proof of proposition \ref{PROPO:gen} in order to obtain
$$\alpha \leq \left(\sum_{m=1}^{\gamma} a_m\right)(dr +r^2(p_a(C) -1))= k\alpha_g.$$

\end{proof}
As in the  case of smooth curves and of irreducible  nodal curves, this gives us  a  necessary condition for $\wa$-semistability.    

\begin{definition}
\label{propertystar}
Let $(E,V)$ be a coherent system of multitype  $({\underline r},d,k)$.
We say that  $(E,V)$  satisfies property $(\star)$ (property $(\star')$ respectively) if for any proper coherent subsystem $(F,U)$ of $(E,V)$ we have either $(\star_1)$ or $(\star_2)$ ($(\star_1)$ or $(\star_2')$ respectively) where
\begin{description}
\item [$(\star_1)$] $\frac{\dim U}{\wrank(F)} < \frac{\dim V}{\rank(E)}$
\item [$(\star_2)$] $\frac{\dim U}{\wrank(F)} = \frac{\dim V}{\rank(E)} \mbox{ and } 
\frac{\wdeg(F)}{\wrank(F)} < \frac{\wdeg(E)}{\rank(E)}$
\item [$(\star_2')$] $\frac{\dim U}{\wrank(F)} = \frac{\dim V}{\rank(E)} \mbox{ and } 
\frac{\wdeg(F)}{\wrank(F)} \leq \frac{\wdeg(E)}{\rank(E)}$
\end{description}
\end{definition}
\begin{remark}
 Under the hypothesis  of Proposition \ref{PROP:stronglyunstable}, we have the following  properties:
 \hfill\par
 \begin{enumerate}
\item{} if $(E,V)$ is a coherent system which is $\wa$-stable ($\wa$-semistable respectively) for any $\alpha > k\alpha_g$, then $(E,V)$ satisfies property  $(\star)$ ($(\star')$ respectively). 
\hfill\par
\item{} if $(E,V)$ is a coherent system   which satisfies property $(\star)$ (property $(\star')$ respectively) and $E$ is ${\underline w}$-stable ($\w$-semistable respectively), then $(E,V)$ is $\wa$-stable ( $\wa$-semistable respectively) for any $\alpha > 0$.
 \end{enumerate}

\end{remark}

The following Theorem   generalizes the results of 
\cite[Proposition  4.5]{BGMN}  and \cite[Proposition 3.16]{Bho}.

\begin{theorem}
\label{THM:A}
Let $C$ be a nodal reducible curve as in Section \ref{nodal curves} and let $\underline{w}$ be a polarization on it. Let $(E,V)$ be a coherent system of multitype $({\underline r},d,k)$ as above. If $(E,V)$ is generically generated  and satisfies property $(\star)$ ($(\star')$ respectively), then $\forall \alpha> k\alpha_g$, $(E,V)$ is $\wa$-stable  ($\wa$-semistable respectively). 
\end{theorem}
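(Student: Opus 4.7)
The plan is to argue by contradiction. Assume $(E,V)$ satisfies the hypotheses but is not $\wa$-stable for some $\alpha > k\alpha_g$; then some proper subsystem $(F,U)$ satisfies $\mu_{\wa}(F,U) \geq \mu_{\wa}(E,V)$. Replacing $F$ by its saturation in $E$ and $U$ by $V \cap H^0(F)$ can only strengthen the destabilization (saturation only increases $\wdeg(F)$, and enlarging $U$ only increases $\dim U$), so I may assume $F$ is saturated in $E$ and $U = V \cap H^0(F)$. By property $(\star)$ applied to this subsystem, $\frac{\dim U}{\wrank(F)} \leq \frac{k}{r}$, so we fall in case $(\star_1)$ or $(\star_2)$. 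In case $(\star_2)$ the $\alpha$-contributions in the two slopes coincide and destabilization reduces to $\frac{\wdeg(F)}{\wrank(F)} \geq \frac{d}{r}$, directly contradicting $(\star_2)$.

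The substantive case is $(\star_1)$, where $\frac{\dim U}{\wrank(F)} < \frac{k}{r}$. Rewriting destabilization gives
\[
\alpha\bigl(k\wrank(F) - r\dim U\bigr) \leq r\wdeg(F) - d\wrank(F),
\]
and the fractional-part step from the proof of Proposition \ref{PROP:stronglyunstable} (using $w_i = a_i/\sum_m a_m$) bounds the positive left-hand coefficient below by $1/\sum_m a_m$. To bound the right-hand side above uniformly in $F$, I would pass to the quotient coherent system $(Q,W) := (E/F,\, V/U)$. Saturation of $F$ makes $Q$ depth one, and by Lemma \ref{LEM:GLOBGEN} generic generation of $(E,V)$ transfers to $(Q,W)$ and hence to each restriction $(Q_i, W_i)$; since $Q_i$ is locally free on the smooth $C_i$, the image of $W_i \otimes \OO_{C_i}$ is a quotient of a trivial sheaf and the torsion cokernel has non-negative length, so $\deg(Q_i) \geq 0$. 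Applying Lemma \ref{LEM:chi} to $Q$ then produces $\wdeg(Q) \geq (\wrank(Q) - r)(p_a(C) - 1)$, equivalently $\wdeg(F) \leq d + \wrank(F)(p_a(C) - 1)$. A short calculation using $\wrank(F) \leq r$ and $d \geq 0$ gives $r\wdeg(F) - d\wrank(F) \leq dr + r^2(p_a(C) - 1)$; combining with the lower bound on the coefficient of $\alpha$ forces $\alpha \leq (\sum_m a_m)(dr + r^2(p_a(C) - 1)) = k\alpha_g$, contradicting $\alpha > k\alpha_g$.

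The $\wa$-semistable version under $(\star')$ is obtained by the same argument, relaxing strict inequalities throughout: in the analog of $(\star_2)$ the hypothesis $(\star_2')$ still contradicts the strict destabilization $\mu_{\wa}(F,U) > \mu_{\wa}(E,V)$, and in case $(\star_1)$ strict destabilization produces $\alpha < k\alpha_g$, again contradicting $\alpha > k\alpha_g$.

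The step I expect to require the most care is the transfer of generic generation from $(E,V)$ to the quotient $(Q,W)$ and then to each component restriction $(Q_i, W_i)$, since this is precisely what supplies the crucial upper bound on $\wdeg(F)$; it rests on the saturation reduction (to make $Q$ depth one), on Lemma \ref{LEM:GLOBGEN}, and on the elementary fact that on a smooth curve a generically generated locally free sheaf has non-negative degree.
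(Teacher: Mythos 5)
Your proposal is correct and follows essentially the same argument as the paper: the same case split along $(\star_1)$/$(\star_2)$, the same fractional-part bound $k\wrank(F)-r\dim U\geq 1/\sum_{m}a_m$, and the same final threshold $k\alpha_g$. The only difference is bookkeeping: where the paper bounds $\chi(F)$ via the restrictions $F_i\subseteq E_i$ and the non-negativity of $\deg(E_i/F_i)$, you bound $\wdeg(E/F)$ from below via the (saturated) quotient system, which is an equivalent estimate; the saturated pair is easily seen to remain a proper subsystem, since a subsheaf with torsion quotient has $\wdeg$ strictly less than $d$ and so never destabilizes.
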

\begin{proof}
Let $(E,V)$ be a generically generated coherent system of multitype $({\underline r},d,k)$ satisfying property $(\star)$.  Assume that there exists a proper coherent subsystem $(F,U)$ destabilizing $(E,V)$. 
 Then $F \subseteq E$ is a subsheaf of depth one,   $U \subseteq V$ with $\dim U = h \leq k$.  Since $(E,V)$ satisfies property $(\star)$, we
have  
$$\frac{h}{\wrank(F)} \leq  \frac{k}{r}.$$ 
\hfill\par
If equality holds, then we are in case $(\star_2)$ so $\frac{\wdeg(F)}{\wrank(F)} < \frac{d}{r}$. This implies
$$ \frac{\wdeg(F)}{\wrank(F)}  + \alpha  \frac{h}{\wrank(F)} < \frac{d}{r} + \alpha \frac{k}{r},$$
for any $\alpha >0$ and thus we have a contradiction. 
\hfill\par
    
So we are in case $(\star_1)$.  The inequality
\begin{equation}
\label{destab}
\mu_{\wa}(F,U)=\frac{\wdeg(F)}{\wrank(F)}+ \alpha \frac{h}{\wrank(F)} \geq \frac{d}{r}+\alpha\frac{k}{r}=\mu_{\wa}(E,V),
\end{equation}
is equivalent to
$$\alpha\left( \frac{k}{r}-\frac{h}{\wrank(F)}\right) \leq  \frac{\wdeg(F)}{\wrank(F)}-\frac{d}{r},$$
which gives
\begin{equation}
\label{mainineq}    
\alpha \leq  \frac{r\wdeg(F)-\wrank(F)d}{
k\wrank(F)-hr}.
\end{equation}
As in the proof of  Proposition \ref{PROP:stronglyunstable}, we denote by  $\lfloor k\wrank(F)\rfloor$ the integral part of $k\wrank(F)$ and  by $\{k\wrank(F)\}$ its fractional part. 
Since $\frac{h}{\wrank(F)} < \frac{k}{r}$, we get 
$$hr \leq 
\begin{cases} 
\begin{array}{cc}
\lfloor k\wrank(F)\rfloor -1 & \mbox{ if } \{k\wrank(F)\}= 0 \\
\lfloor k\wrank(F)\rfloor & \mbox{ if } \{k\wrank(F) \} \not= 0 
\end{array}
\end{cases}\Longrightarrow -hr\geq -\lfloor k\wrank(F)\rfloor.$$
This implies that
$ k\wrank(F)  -hr \geq  \{ k\wrank(F) \} = \frac{b}{\sum_{m=1}^{\gamma}a_m}$,
 with 
$1 \leq b \leq \sum_{m=1}^{\gamma}a_m -1$.
Hence we obtain 
$$k\wrank(F)  -hr \geq \frac{1}{\sum_{m=1}^{\gamma} a_m},$$
and Inequality \ref{mainineq}  becomes:
$$ \alpha \leq  \left(\sum_{m=1}^{\gamma}a_m\right)( r\wdeg(F) -d \wrank(F)) \leq 
\left(\sum_{m=1}^{\gamma}a_m\right)( r\wdeg(F)),$$
as  $\wrank(F) > 0$ and $d \geq 0$, since $(E,V)$ is generically generated, see Remark \ref{REM:gen}.  
By definition, we have:  $\wdeg(F) = \chi(F) - \wrank(F) \chi(O_C)$,
and by Lemma \ref{LEM:chi} (1) we have:
$$\chi(F) \leq \sum_{i=1}^{\gamma}\chi(F_i)=\sum_{i}^{\gamma}(\deg(F_i)+r_i(1-g_i)).$$
Since $F_i$ is a subsheaf of $E_i$, which is generically generated,  the quotient $E_i/F_i$ is generically generated too and so has non negative degree.  This implies that  $\deg(F_i) \leq \deg(E_i)$.  By Lemma \ref{LEM:chi} (2), $\sum_{i=1}^{\gamma}\deg(E_i) \leq \wdeg(E)= d$, so that  
$$
\chi(F) \leq d -\sum_{i=1}^{\gamma}r_i(g_i-1) \leq d,
$$
since $r_i \geq 0$  and $g_i \geq 2$.
Finally, we obtain:
$$\alpha \leq  \left(\sum_{m=1}^{\gamma}a_m\right)( rd +r \wrank(F)(p_a(C)-1)) \leq 
\left(\sum_{m=1}^{\gamma}a_m\right)( rd +r^2(p_a(C)-1)) = k \alpha_g.$$
\end{proof}

\begin{corollary}
\label{COR:A}
Let $C$ and $\w$ as in Theorem \ref{THM:A}. For any  $r \geq 1$, $d \geq 0$, $k \geq r$ integers, the moduli spaces ${\mathcal G}_{\wa}({\underline r},d,k)$ coincide for any $\alpha > k{\alpha_g}$. 
\end{corollary}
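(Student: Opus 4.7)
The plan is to show that $\wa$-stability for $\alpha > k\alpha_g$ admits a characterisation that does not involve $\alpha$, so that neither the set of stable objects nor the moduli space can change as $\alpha$ varies in this range. Concretely, I would prove the following alpha-free criterion: for $\alpha > k\alpha_g$ and a coherent system $(E,V)$ of multitype $(\underline{r},d,k)$, being $\wa$-stable is equivalent to $(E,V)$ being generically generated and satisfying property $(\star)$.

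One direction is exactly the content of Theorem \ref{THM:A}. For the converse, first note that $k\geq r\geq 1$ and $p_a(C)\geq 2$ imply $k\alpha_g\geq \alpha_g$, so Proposition \ref{PROPO:gen} applies and any $\wa$-stable coherent system with $\alpha>k\alpha_g$ is automatically generically generated. Property $(\star)$ then follows by combining Proposition \ref{PROP:stronglyunstable}, which rules out any proper coherent subsystem $(F,U)$ with $\dim U/\wrank(F)>\dim V/\wrank(E)$, with the strict slope inequality defining $\wa$-stability, which in the equality case $\dim U/\wrank(F)=\dim V/\wrank(E)$ forces $\wdeg(F)/\wrank(F)<\wdeg(E)/\wrank(E)$; together these cover exactly the two clauses $(\star_1)$ and $(\star_2)$.

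Since both conditions in the characterisation are intrinsic to $(E,V)$ and do not depend on $\alpha$, the underlying set of $\wa$-stable coherent systems of multitype $(\underline{r},d,k)$ is constant for $\alpha>k\alpha_g$. To upgrade this pointwise equality to an identification of the coarse moduli spaces, one observes that stability is a fibrewise condition on families, so any family $(\mathcal{E},\mathcal{V},\xi)$ over a scheme $T$ parametrising $({\underline w},\alpha_1)$-stable coherent systems is automatically a family of $({\underline w},\alpha_2)$-stable systems for any other $\alpha_2>k\alpha_g$; the universal properties of $\mathcal{G}_{({\underline w},\alpha_1)}(\underline{r},d,k)$ and $\mathcal{G}_{({\underline w},\alpha_2)}(\underline{r},d,k)$ then yield mutually inverse classifying morphisms between them. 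The only real care needed is the assembly of property $(\star)$ from the two independent inputs above; once the $\alpha$-free characterisation is in place, the conclusion is essentially formal and there is no genuine obstacle.
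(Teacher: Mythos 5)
Your argument is correct and is essentially the route the paper intends: the corollary is stated without proof precisely because, for $\alpha>k\alpha_g$, Proposition \ref{PROPO:gen} and Proposition \ref{PROP:stronglyunstable} (via item (1) of the remark after Definition \ref{propertystar}) show that $\wa$-stability forces generic generation and property $(\star)$, while Theorem \ref{THM:A} gives the converse, so the stable locus is independent of $\alpha$ in this range. Your additional remark on passing from the pointwise statement to the identification of the coarse moduli spaces via families is the standard completion of this argument and introduces no new difficulty.
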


\begin{corollary}
In the hypothesis of Theorem \ref{THM:A}. If $(E,V)$ satisfies property $(\star)$, then each  restriction $(E_i,V_i)$ satisfies the condition $\dim V_i \geq w_i\dim V$.
\end{corollary}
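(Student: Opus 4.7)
The plan is to construct a canonical coherent subsystem of $(E,V)$ out of the restriction map to $C_i$ and apply property $(\star)$ to it. More precisely, I would take the kernel $F$ of the surjective restriction map $E \twoheadrightarrow E_i$ and couple it with the subspace $U \subseteq V$ consisting of sections vanishing along $C_i$.

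\textbf{Step 1: the subsheaf $F$.} Define $F = \ker(E \twoheadrightarrow E_i)$. Since $E$ is depth one, so is $F$. Away from $\Delta_i$ the restriction $E \to E_i$ is the ordinary restriction to $C_i$, so $F_{|C_j \setminus \Delta_j} = E_{|C_j \setminus \Delta_j}$ for $j \ne i$, while $F_{|C_i \setminus \Delta_i} = 0$. This gives $\mrank(F)_j = r$ for $j\ne i$ and $\mrank(F)_i = 0$. Consequently
\[
\wrank(F) \;=\; \sum_{j \ne i} w_j \, r \;=\; r(1-w_i).
\]
Since $w_i < 1$ and $\gamma \geq 2$, we have $\wrank(F) > 0$, so $F \ne 0$; and since $\wrank(F) < r = \wrank(E)$, we have $F \ne E$.

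\textbf{Step 2: the subspace $U$.} Let $\rho_i \colon V \to V_i$ be the restriction map, and set
\[
U \;=\; V \cap H^{0}(F) \;=\; \ker(\rho_i|_V).
\]
Then $\dim U = \dim V - \dim V_i = k - \dim V_i$, and $(F,U)$ is a proper coherent subsystem of $(E,V)$ by Step 1.

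\textbf{Step 3: apply property $(\star)$.} Property $(\star)$ forces either $(\star_1)$ or $(\star_2)$, and in both cases we have the weak inequality
\[
\frac{\dim U}{\wrank(F)} \;\leq\; \frac{\dim V}{\wrank(E)} \;=\; \frac{k}{r}.
\]
Substituting $\wrank(F) = r(1-w_i)$ gives $\dim U \leq k(1-w_i)$, and therefore
\[
\dim V_i \;=\; k - \dim U \;\geq\; k - k(1-w_i) \;=\; w_i \, k \;=\; w_i \dim V,
\]
which is the claimed bound. I don't anticipate a real obstacle here: the only subtlety is verifying that $(F,U)$ is a \emph{proper} subsystem, which reduces to checking that $F$ has strictly positive $\wrank$ (requires $\gamma \geq 2$, built into the setting of Section~\ref{nodal curves}) and strictly smaller $\wrank$ than $E$ (immediate from $w_i > 0$). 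The inequality then drops out of property $(\star)$ applied to this single test subsystem.
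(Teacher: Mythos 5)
Your proposal is correct and follows essentially the same route as the paper: take the kernel $G$ of the restriction $E \to E_i$ together with $U = \ker(\rho_i|_V)$, note $\wrank(G) = r(1-w_i)$ and $\dim U = \dim V - \dim V_i$, and apply the weak inequality from property $(\star)$ to this test subsystem. Your extra checks that the subsystem is proper are a harmless refinement of what the paper asserts.
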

\begin{proof}
Consider the subsheaf $G \subset E$ which is the kernel of the restriction map $\rho_i \colon E  \to E_i$:
$$ 0 \to G \to E \to E_i \to 0.$$
It is a depth one sheaf and $\wrank(G)= \wrank(E) - \wrank(E_i)= r(1 -w_i)$. Let $U \subset V$ be the kernel of the restriction map $\rho_i:V \to V_i$. Then $\dim U = \dim V - \dim V_i \geq 0$ and 
$U \subseteq H^0(G)$. Hence $(G,U)$  is a proper coherent subsystem of $(E,V)$. Since it satisfies property $(\star)$ we have
$$ \frac{\dim U}{\wrank(G)} \leq \frac{\dim V}{\wrank(E)},$$
which is equivalent to 
$$  r(\dim V -\dim V_i) \leq r(1-w_i)\dim V, $$
which gives us $\dim V_i \geq w_i \dim V$.
\end{proof}

Finally we have the following sufficient condition for $\wa$-stability. 
\begin{theorem}
\label{THM:stab}
Let $C$ be a reducible nodal curve as in Section \ref{nodal curves} and let $\underline{w}$ be a polarization on it. Let $(E,V)$ be a  coherent system of multitype $({\underline r},d,k)$ and denote with $(E_i,V_i)$ its restrictions to $C_i$. 
Assume that:
\begin{enumerate}
\item{} $(E,V)$ is generically generated;
\item $(E_i,V_i)$ is a coherent system of type $(r,d_i,k)$ on $C_i$;
\item $(E_i,V_i)$ is $\alpha$-stable  
for any $\alpha>d_i(r-1)$;
\item either $r$ and $k$ are coprime or $E$ is $w$-stable. 
\end{enumerate}
Then $(E,V)$ is $\wa$-stable $\forall \alpha>k \alpha_g$.
\end{theorem}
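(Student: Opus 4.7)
The plan is to reduce the claim to Theorem \ref{THM:A}: I will show that $(E,V)$ satisfies property $(\star)$ of Definition \ref{propertystar}, since hypothesis (1) supplies the other ingredient. Let $(F,U)$ be a proper coherent subsystem of $(E,V)$, set $h = \dim U$, and denote by $F_i = F \otimes \OO_{C_i}/\text{torsion}$ the restriction modulo torsion with $r_i^F = \rank(F_i)$. The key initial observation is that, since $(E_i,V_i)$ has type $(r,d_i,k)$, the restriction map $\rho_i\colon V \to V_i$ is an isomorphism (both spaces being $k$-dimensional), so $U_i := \rho_i(U) \subseteq V_i$ satisfies $\dim U_i = h$ and $(F_i,U_i)$ is naturally a coherent subsystem of $(E_i,V_i)$ on the smooth curve $C_i$.

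By hypothesis (3), each $(E_i,V_i)$ is $\alpha$-stable for arbitrarily large $\alpha$. Letting $\alpha \to \infty$ in the stability inequality, $(E_i,V_i)$ satisfies the smooth-curve analogue of property $(\star)$: for any proper subsystem one has either $\dim U_i/r_i^F < k/r$, or equality together with $\deg(F_i)/r_i^F < d_i/r$. Applied componentwise whenever $(F_i,U_i)$ is a proper subsystem, summing the inequalities $hr \leq k r_i^F$ with weights $w_i$ yields $hr \leq k\,\wrank(F)$, and strict inequality at any single $i$ promotes the sum to strict, giving $(\star_1)$ at once.

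The main obstacle is the equality case $hr = k r_i^F$ for every $i$, which forces $r_i^F = hr/k$ to be independent of $i$; this is where hypothesis (4) enters. When $\gcd(r,k) = 1$, divisibility forces $h = k$ and $r_i^F = r$ for every $i$, hence $U = V$, $U_i = V_i$, and each $F_i \subsetneq E_i$ (since $(F_i,U_i)$ is assumed proper in this subcase). Then $E/F$ is a nonzero torsion sheaf, so $\chi(F) < \chi(E)$, and since $\wrank(F) = \wrank(E) = r$ we conclude $\wdeg(F) < d$, yielding $(\star_2)$. When instead $E$ is $\w$-stable, the case $\wrank(F) < r$ follows at once from $\mu_{\w}(F) < \mu_{\w}(E)$, while $\wrank(F) = r$ forces $F \subsetneq E$ of the same $\w$-rank, so once more $\w$-stability gives $\wdeg(F) < d$.

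A final technicality is the mixed regime in which some $(F_i,U_i) = (E_i,V_i)$ while others are proper subsystems of $(E_i,V_i)$. An equality at any component gives $h = k$ at once, and then the bound $hr \leq k r_j^F$ at a proper component forces $r_j^F = r$ there too; so every $r_i^F$ equals $r$ and the torsion-quotient analysis of the previous paragraph applies to the inclusion $F \subsetneq E$ of constant multirank $(r,\dots,r)$. The crux of the proof is thus the equality case, where hypothesis (4) is precisely what rules out a proper subsystem sharing the same $\alpha$-slope as $(E,V)$ without being dominated in $\wdeg$-slope.
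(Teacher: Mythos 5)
Your proof is correct and follows essentially the same route as the paper's: reduce to property $(\star)$ via Theorem \ref{THM:A}, use the isomorphism $V\simeq V_i$ from hypothesis (2) to transfer a proper subsystem $(F,U)$ to subsystems $(F_i,U_i)$ of $(E_i,V_i)$ with $\dim U_i=h$, invoke the component-wise property $(\star)$ coming from $\alpha$-stability for arbitrarily large $\alpha$ (the paper cites \cite{BGMN} for this), take the $w_i$-weighted sum of $hr\leq k\,r^F_i$, and let hypothesis (4) handle the equality case. If anything, your analysis of the coprime equality case (forcing $r^F_i=r$, $h=k$, and then deducing $(\star_2)$ from the nonzero torsion quotient $E/F$) is more explicit than the paper's, which simply asserts that the component inequalities are strict under coprimality.
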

\begin{proof}
By Theorem \ref{THM:A} it is enough to prove that $(E,V)$ satisfies property $(\star)$. 
Let $(F,U)$ be a proper coherent subsystem  of $(E,V)$. First of all we  have to prove that:
\begin{equation}
\label{ineq:2}
\frac{\dim U}{\wrank(F)} \leq \frac{\dim V}{\wrank(E)}.
\end{equation}
We recall that $F \subseteq E$ is a subsheaf of depth one and $U \subseteq V$ with $\dim U = h \leq k$. If $h=0$ then \ref{ineq:2} is satisfied. So we can assume $h \geq 1$.  For any $i$ we  consider the restriction  $(F_i,U_i)$, it  is a coherent subsystem of $(E_i,V_i)$. In particular, since by assumption (2),  the restriction map $\rho_i|_V \colon V \to V_i$ is an isomorphism,   then  $\rho_i|_U \colon U \to U_i$ is  an isomorphism too. This implies  $\dim U_i = \dim U =h$.    Let $r_i= \rank(F_i)$,  as $\dim U_i \geq 1$, then $r_i \geq 1$.  Since $(E_i,V_i)$ is $\alpha$-stable for $\alpha > d_i(r-1)$, then, by \cite[Proposition 4.5]{BGMN}, it satisfies property $(\star)$, in particular:  
\begin{equation}
\label{ineq:1}
\frac{h}{r_i} \leq \frac{k}{r},
\end{equation}
equivalentely  $hr \leq kr_i$.
From the above inequality we deduce the following: 
$$hr = \sum_{i=1}^{\gamma}w_ihr \leq \sum_{i=1}^{\gamma}w_ikr_i = k \wrank(F), $$ 
which is equivalent to  \ref{ineq:2}. 
\hfill\par
Finally, if $r$ and $k$ are coprime,  then  both the inequality \ref{ineq:2} and \ref{ineq:1} are strict, so $(E,V)$ satisfies property $(\star_1)$ (and $(\star_2)$ cannot occur). If $r$ and $d$ are not coprime, by hypotesis, we have that $E$ is $\w$-stable so 
$$\frac{\wdeg(F)}{\wrank(F)}< \frac{\wdeg(E)}{\wrank(E)}.$$
This, with the inequality \ref{ineq:2} guarantees that $(E,V)$ satisfy $(\star)$ as claimed.
\end{proof}

\section{Construction of coherent system of type $(r,d,r+1)$}
Let $C$ be a nodal curve as in Section \ref{nodal curves}, with $\gamma$ irreducible components. 
Let $L \in \Pic^{(d_1,\cdots,d_{\gamma})}(C)$ be a  globally generated line bundle on $C$. 
 From Lemma \ref{LEM:chi} we have that 
$$\chi(L)=\sum_{i=1}^{\gamma} \chi({L_i})-\gamma+1 \quad 
deg_{\underline w}(L) = d = \sum_{i=1}^{\gamma}d_i.$$

\hfill\par

Let $r \geq 1$ and consider a subspace $W\subseteq H^0(L)$ of dimension $r+1$  such that the evaluation map 
$$ev_W :W\otimes \OO_C\to L$$
is surjective. Then $(L,W)$ is a generated coherent system of multitype $({\underline 1},d,r+1)$  on the curve $C$. Let $(L_i,W_i)$ be the restriction of $(L,W)$ to the component $C_i$.
By Lemma \ref{LEM:GLOBGEN},    $(L_i,W_i)$ is a generated coherent system on $C_i$ of type $(1,d_i,k_i)$. This implies $d_i \geq 0$, $\forall i = 1, \cdots, \gamma$. 
For any $i$, we define $R_i$ as follows
\begin{equation}
\label{Ri}
R_i = H^0(L|_{C_i^c}(-\Delta_i)) \cap W,
\end{equation}
where $\OO_{{C_i}^c}(-\Delta_i)$ is defined in the exact sequence \ref{EXSEQ:CI}.

\begin{proposition}
\label{Prop1}
Let $C$ be a nodal curve as in Section \ref{nodal curves}, with $\gamma$ irreducible components and ${\underline w}$ a polarization on it. 
Let  $L \in \Pic^{(d_1,\cdots, d_{\gamma})}(C)$ be  a line bundle  on the curve $C$.  Let $(L,W)$ be a generated coherent system  of multitype $({\underline 1},d,r+1)$ and  assume that $R_i = 0$ for any $i = 1, \cdots, \gamma$.
Then we have:
\begin{enumerate}
    \item{} $(L_i,W_i)$ is a generated coherent system on $C_i$  of type $(1,d_i,r+1)$;
    \item{}  $(L,W)$ is $\wa$-stable for any $\alpha > (r+1)\alpha_g$.
\end{enumerate}
\end{proposition}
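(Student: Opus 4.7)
My plan has two parts corresponding to the two claims. The linchpin in both is the observation that the hypothesis $R_i = 0$ is precisely the statement that the restriction map $\rho_i|_W \colon W \to H^0(L_i)$ is injective, since its kernel is $W \cap H^0(L|_{C_i^c}(-\Delta_i)) = R_i$, coming from the exact sequence \ref{EXSEQ:CI} tensored with $L$.

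For (1), once this injectivity is noted, $\dim W_i = \dim W = r+1$, and since $L_i$ has rank $1$ and degree $d_i$, the coherent system $(L_i, W_i)$ has type $(1, d_i, r+1)$. Lemma \ref{LEM:GLOBGEN} applied to the connected subcurve $C_i$ then shows $(L_i, W_i)$ is generated.

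For (2), I will verify property $(\star)$ of Definition \ref{propertystar} for $(L, W)$ and then invoke Theorem \ref{THM:A} with $k = r+1$. Let $(F, U)$ be an arbitrary proper coherent subsystem of $(L, W)$, write $h = \dim U$, and note that $\wrank(L) = 1$ and $\dim W/\wrank(L) = r+1$. If $U = 0$ the inequality $h/\wrank(F) = 0 < r+1$ is trivial and $(\star_1)$ holds. Otherwise, I claim that $F_i \neq 0$ on every component. Indeed, if $F_i = 0$ for some $i$, then since $L$ is locally free and $L_i$ torsion-free, the composition $F \hookrightarrow L \twoheadrightarrow L_i$ factors through the torsion quotient $F_i = 0$; hence $F \subseteq L|_{C_i^c}(-\Delta_i)$, which gives $U \subseteq W \cap H^0(L|_{C_i^c}(-\Delta_i)) = R_i = 0$, contradicting $U \neq 0$. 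Thus $F_i \neq 0$ for all $i$, so $\wrank(F) = \sum_i w_i = 1$.

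Next I rule out $U = W$: if $U = W$, then $W \subseteq H^0(F)$ implies the image of $\mathrm{ev}_W$ is contained in $F$, but $(L, W)$ is generated so this image is $L$, forcing $F = L$ and contradicting properness. Therefore $h \leq r$ and $h/\wrank(F) = h \leq r < r+1$, so again $(\star_1)$ holds. Property $(\star)$ being established for every proper subsystem, Theorem \ref{THM:A} yields that $(L, W)$ is $\wa$-stable for every $\alpha > (r+1)\alpha_g$. The only subtle step is the case-analysis showing that $U \neq 0$ forces both $\wrank(F) = 1$ and $h \leq r$ simultaneously; everything else is bookkeeping.
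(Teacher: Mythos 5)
Your proof of part (1) is the same as the paper's: the kernel of $\rho_i|_W$ is exactly $R_i$, so $R_i=0$ gives $\dim W_i=r+1$, and generation of $(L_i,W_i)$ comes from Lemma \ref{LEM:GLOBGEN}. For part (2), however, you take a genuinely different (and correct) route. The paper simply invokes Theorem \ref{THM:stab}: since $R_i=0$, each restriction $(L_i,W_i)$ is a generated coherent system of type $(1,d_i,r+1)$ on the smooth curve $C_i$, hence $\alpha$-stable for every $\alpha>0$, and since $1$ and $r+1$ are coprime the sufficient criterion applies, giving $\wa$-stability for $\alpha>(r+1)\alpha_g$. You instead verify property $(\star)$ for $(L,W)$ directly and apply Theorem \ref{THM:A}: for a proper subsystem $(F,U)$ with $U\neq 0$, the hypothesis $R_i=0$ forces $F_i\neq 0$ on every component (your factorization of $F\to L_i$ through the torsion sheaf $F\otimes\OO_{C_i}$ is sound, since $\ker(L\to L_i)=L|_{C_i^c}(-\Delta_i)$ and $U\subseteq W\cap H^0(F)\subseteq R_i$ otherwise), so $\wrank(F)=1$, while generation of $(L,W)$ rules out $U=W$, giving $\dim U\leq r<r+1$ and hence $(\star_1)$; the case $U=0$ is trivial. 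Both arguments land on the same bound $k\alpha_g=(r+1)\alpha_g$. What the paper's route buys is brevity, since Theorem \ref{THM:stab} (itself resting on the $\alpha$-stability of line-bundle systems on the smooth components, i.e.\ \cite[Proposition 4.5]{BGMN}) is already available; what your route buys is self-containedness and transparency, since it exploits the rank-one structure and the hypothesis $R_i=0$ on $C$ itself without passing through the stability of the restrictions. Your closing case analysis ($U\neq 0$ forcing both $\wrank(F)=1$ and $\dim U\leq r$) is exactly the point that needs care, and you handle it correctly.
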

\begin{proof}
(1) From the exact sequence \ref{EXSEQ:CI}, by tensoring with $L$ and passing to global sections we have: 
$$ 0 \to H^0( L|_{C_i^c}(-\Delta_i)) \to H^0(L) \xrightarrow{\rho_i} H^0(L_i) \to \cdots .$$
When we restrict $\rho_i$ to $W$ we obtain
\begin{equation}
\label{EXSEQ:Ri}
0 \to R_i  \to W  \to W_i \to 0.
\end{equation}
Then $W \simeq W_i$ if and only if  $R_i = \{ 0 \}$.

(2) The assertion follows from Theorem \ref{THM:stab}, since $(L_i,W_i)$ is $\alpha$-stable for any $\alpha > 0$. 
\end{proof}

Let $L \in \Pic^{(d_1,\cdots, d_{\gamma})}(C)$ be a line bundle with $d_i \geq 1$. 
Consider   a generated coherent system  $(L,W)$ of multitype $({\underline 1},d,r+1)$.
We can  associate to  it a coherent system $(E,V)$ of multitype $({\underline r},d,r+1)$  on $C$, with $E$  locally free.
As $ev_W:W\otimes\OO_C\to L$ is surjective it defines an exact sequence of vector bundles on $C$:
\begin{equation}
    \label{EXSEQ:DEFDUAL}
0 \to \Ker ev_W \to W \otimes \OO_C \to L \to 0,
\end{equation}
we will denote it  {\it the exact sequence defined by} $(L,W)$. 
Its dual gives the exact sequence
\begin{equation}
    \label{EXSEQ:DEF}
0 \to L^{-1} \to W^* \otimes \OO_C \to E \to 0,
\end{equation}
where we set $E = (\Ker ev_W)^*$.
Note that by construction $E$ is a vector bundle on $C$ of rank $r$ with determinant $\det(E)=L$. 
By taking the induced exact sequence on cohomology, as  $d_i\geq 1$, by Lemma \ref{LEM:chi} we have $h^0(L^{-1}) = 0$, so we have   an injective map 
$$0\to W^* \to H^0(E) \to \cdots$$
We define $V$ as the image of $W^*$ in $H^0(E)$ by the above inclusion.  As $V \simeq W^*$,  we have $\dim V= r+1$ . By the exactness of sequence \ref{EXSEQ:DEF}, we  also have  that $V$ generates $E$, so $(E,V)$ is a generated coherent system of multitype $({\underline r},d,r+1)$ on $C$. 
 
\begin{definition}
\label{defi:dualspan}
The coherent system $(E,V)$ is called  the dual span of $(L,W)$ and we will denote it as $D((L,W))$.
\end{definition} 

Vector bundles  arising as kernel of the evaluation map of a generated coherent system $(E,V)$,  on a smooth curve,  are called {\it kernel bundles} or {\it Lazarsfeld bundles}.  They were introduced by Butler and their stability  has    been  deeply studied by many authors.  For recent results on nodal curves with a node see \cite{BF3}.   
 
\begin{remark}
 Since $(L,W)$ is generated it defines a morphism $\Phi_{|W|}:C\to \PP W^*=\PP^r$. Consider the Euler sequence on $\PP W^*$:
$$0 \to \OO_{\PP^r}(-1) \to W^* \otimes \OO_{\PP^r} \to T_{\PP^r}(-1) \to 0.$$
By taking the pullback of this sequence with respect to $\Phi_{|W|}$ we obtain the exact sequence \ref{EXSEQ:DEF}. Hence we have $E=\Phi^{*}_{|W|}T_{\PP^r}(-1)$ and global sections of  $V$ are the pullback of the global sections of $T_{\PP^r}(-1)$ which are in $W^*$.
\end{remark}

\begin{remark}
\label{REM:Injective}
If $(E,V) = D((L,W)) = D((L',W'))$,  then $(L,W)=(L',W')$. In fact,  by the exact sequence defining $E$ we have that  $L=\det(E)=L'$.
We get the equality $W = W'$  by dualizing exact sequence \ref{EXSEQ:DEF} and considering the cohomology sequence. 
\end{remark}

\begin{proposition}
\label{PROP:2}
Let $C$ be a nodal curve as in Section \ref{nodal curves}, with $\gamma$ irreducible components.
Let $L  \in \Pic^{(d_1,\cdots, d_{\gamma})}(C)$ be   a line bundle  on the curve $C$, with $d_i \geq 1$.  Let  $(L,W)$ be a generated coherent system   of multitype $({\underline 1},d,r+1)$.   Consider its dual span $D((L,W))=(E,V)$ and its restriction $(E_i,V_i)$ to the component $C_i$. Then we have:
\begin{enumerate}
    \item if $R_i = 0$, then $(E_i,V_i)=D((L_i,W_i))$ and it is $\alpha$-stable for any $\alpha > (r-1)d_i$;
    \item if $R_i \not= 0$, then $(E_i,V_i)$ is $\alpha$-unstable for any $\alpha > 0$.
\end{enumerate}
\end{proposition}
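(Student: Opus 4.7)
My plan is to restrict the defining exact sequence of $(E,V) = D((L,W))$ to each component $C_i$ and read off the structure in terms of $R_i$. Since $E$ and $L^{-1}$ are locally free on $C$, tensoring
$$0 \to L^{-1} \to W^* \otimes \OO_C \to E \to 0$$
with $\OO_{C_i}$ stays exact, yielding
$$0 \to L_i^{-1} \to W^* \otimes \OO_{C_i} \to E_i \to 0.$$
Since $d_i \geq 1$, Lemma \ref{LEM:chi}(2) gives $h^0(L_i^{-1}) = 0$, so $W^* \hookrightarrow H^0(E_i)$; by naturality of the restriction maps this image coincides with $V_i$, so $\dim V_i = r+1$ automatically.

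For (1), if $R_i = 0$ then Proposition \ref{Prop1}(1) gives $W \simeq W_i$ via restriction, so the sequence above becomes precisely the defining sequence of $D((L_i,W_i))$. Hence $(E_i, V_i) = D((L_i, W_i))$. The required $\alpha$-stability of the dual span of a generated line bundle on the smooth curve $C_i$ for $\alpha > (r-1)d_i$ is the classical smooth-curve result which is invoked as hypothesis (3) of Theorem \ref{THM:stab}; it follows from the smooth-curve version of Theorem \ref{THM:A} (see \cite{BGMN}).

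For (2), assume $R_i \neq 0$ and set $s := \dim R_i \geq 1$. Choose a complement $W'$ of $R_i$ in $W$, so that the restriction induces $W' \xrightarrow{\sim} W_i$. Sections of $R_i$ vanish identically on $C_i$, so $ev_W \colon W \otimes \OO_{C_i} \to L_i$ is zero on $R_i \otimes \OO_{C_i}$ and coincides with $ev_{W_i}$ on $W' \otimes \OO_{C_i}$. Taking kernels and dualizing therefore yields a splitting
$$E_i \simeq (R_i^* \otimes \OO_{C_i}) \oplus F, \qquad F := D((L_i,W_i));$$
applying the same analysis to $W^* = R_i^* \oplus W_i^*$ gives $V_i = R_i^* \oplus V''$ inside $H^0(E_i) = R_i^* \oplus H^0(F)$, where $(F, V'')$ is exactly the dual span of $(L_i, W_i)$. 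Note $r - s \geq 1$ because $d_i \geq 1$ forces $\dim W_i \geq 2$. Then $(F, V'')$ is a proper coherent subsystem of $(E_i, V_i)$ of type $(r-s, d_i, r+1-s)$, and a direct slope computation gives
$$\mu_\alpha(F, V'') - \mu_\alpha(E_i, V_i) = \frac{s(d_i + \alpha)}{r(r - s)} > 0$$
for every $\alpha > 0$, proving that $(E_i, V_i)$ is $\alpha$-unstable.

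The principal obstacle is the stability claim in part (1), which relies on importing the precise bound $\alpha > (r-1)d_i$ for dual spans on smooth curves. Everything else — restricting the defining sequence, identifying $V_i$, the splitting in Case (2), and the destabilizing slope inequality — is bookkeeping with kernels and duals once the decomposition $E_i \simeq (R_i^* \otimes \OO_{C_i}) \oplus F$ is available.
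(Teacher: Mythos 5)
Your argument is correct and follows essentially the same route as the paper: restrict the defining sequence of the dual span to $C_i$, identify it with the dual span of $(L_i,W_i)$ when $R_i=0$ (importing $\alpha$-stability for $\alpha>(r-1)d_i$ from \cite[Corollary 5.10]{BGMN}), and when $R_i\neq 0$ exhibit the dual span of $(L_i,W_i)$, of type $(r-s,d_i,r+1-s)$, as a destabilizing subsystem --- your explicit splitting $E_i\simeq \OO_{C_i}^{s}\oplus D((L_i,W_i))$ is just a slightly more explicit form of the paper's dualized commutative diagram, and your slope computation agrees with theirs. The only blemish is the citation for $h^0(L_i^{-1})=0$: Lemma \ref{LEM:chi}(2) is not the relevant statement, since all you need is that a line bundle of negative degree on the smooth curve $C_i$ has no global sections.
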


\begin{proof}
If we tensor  the exact sequence \ref{EXSEQ:DEFDUAL} defined by $(L,W)$   with $\OO_{C_i}$, we  have again an  exact sequence too. Its relation with the exact sequence defined by the restriction $(L_i,W_i)$ is described in the following  diagram.
\begin{equation}
\label{EXDIAG:BIGEXDIAG}    
\xymatrix@R=8mm@C=8mm{
 & 
 R_i\otimes \OO_{C_i} \ar@{^{(}->}[d] \ar@{=}[r] &
 R_i\otimes \OO_{C_i} \ar@{^{(}->}[d] \\
0 \ar[r] &
(\Ker ev_W)\otimes \OO_{C_i} \ar@{^{(}->}[r] \ar@{->>}[d] & 
W\otimes \OO_{C_i} \ar@{->>}[r]^-{ev_{W}|_{C_i}} \ar@{->>}[d]^-{\rho_i} &
L_i \ar[r] \ar@{=}[d] &
0\\
0 \ar[r] &
\Ker ev_{W_i} \ar@{^{(}->}[r] & 
W_i\otimes \OO_{C_i} \ar@{->>}[r]^-{ev_{W_i}} &
L_i \ar[r] &
0
}
\end{equation}
where the second column is simply the exact sequence \ref{EXSEQ:Ri} tensored again with $\OO_{C_i}$. It is easy to see that the above diagram is indeed commutative. 
Finally, $\dim W_i \geq 2$ as $L_i$ is globally generated of degree $d_i\ge 1$ and $\Ker e_{W_i}$ is a vector bundle on $C_i$. 

If we dualize diagram \ref{EXDIAG:BIGEXDIAG} we can clearly see the relations between the dual span $(G_i,W_i^*)$ of $(L_i,W_i)$ and the restriction $(E_i,V_i)$ of the dual span $(E,V)$ of $(L,W)$. More precisely we have, as claimed 
$$(G_i,W_i^*)=(E_i,V_i) \Longleftrightarrow R_i=0.$$
Moreover, if   $R_i\neq 0$,  then  $(G_i,W_i^*)$ is a non trivial coherent subsystem of $(E_i,V_i)$.
\hfill\par
Now we prove the statements about $\alpha$-stability of $(E_i,V_i)$.

First of all, assume that $R_i=0$. Then we have $W_i \simeq W$ and, as we have seen, the restriction $(E_i,V_i)$ of $(E,V)$  is indeed the dual span of $(L_i,W_i)$. By \cite[Corollary 5.10]{BGMN},  it follows that  $(E_i,V_i)$  is  $\alpha$-stable for all $\alpha>d_i(r-1)$. 

Assume now that $R_i\neq 0$. Then $(G_i,W_i^*)$ is a non trivial coherent subsystem of $(E_i,V_i)$ of type $(s_i,d_i,s_i +1)$,  we prove that it  is a destabilizing subsystem of  $(E_i,V_i)$.
First of all note that since $s_i<r$ we have $\frac{(s_i+1)}{s_i} > \frac{(r+1)}{r}$. On the other hand, as $d_i\geq 1$, we have also $\frac{d_i}{s_i}>\frac{d_i}{r}$ hence
$$\mu_{\wa}(G_i,W_i^*)=\frac{d_i}{s_i}+\alpha\frac{s_i+1}{s_i}>\frac{d_i}{r}+\alpha\frac{r+1}{r}=\mu_{\wa}(E_i,V_i).$$
Hence, for all $\alpha>0$  we have that $(E_i,V_i)$ is $\alpha$-unstable as claimed.
\end{proof}

\begin{theorem}
\label{THM:1}
Let $C$ be a nodal curve as in Section \ref{nodal curves}, with $\gamma$ irreducible components and let ${\underline w}$ be a polarization on it. 
Let $L \in  \Pic^{(d_1,\cdots, d_{\gamma})}(C)$ be  a line bundle  on the curve $C$,  with $d_i \geq 1$. Let $(L,W)$ be  a generated coherent system of multitype $({\underline 1},d,r+1)$ satisfying $R_i = 0$ for $i = 1 \cdots \gamma$. 
Then the dual span $(E,V)$ of $(L,W)$ is $\wa$-stable for any $\alpha>(r+1){\alpha}_g$.
\end{theorem}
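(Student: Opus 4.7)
The plan is to deduce the theorem as a direct application of Theorem \ref{THM:stab} to the coherent system $(E,V) = D((L,W))$ of multitype $({\underline r},d,r+1)$. Since here $k = r+1$, the rank $r$ and $k$ are automatically coprime, so condition (4) in Theorem \ref{THM:stab} is satisfied for free. It therefore suffices to check the remaining three conditions: (1) that $(E,V)$ is generically generated, (2) that each restriction $(E_i,V_i)$ is of type $(r,d_i,r+1)$ on $C_i$, and (3) that each $(E_i,V_i)$ is $\alpha$-stable for $\alpha > d_i(r-1)$.

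Condition (1) is built into the construction of the dual span: by the exact sequence \ref{EXSEQ:DEF}, $V \simeq W^*$ generates $E$, so $(E,V)$ is even honestly generated, hence generically generated. For condition (2), since $R_i = 0$, the sequence \ref{EXSEQ:Ri} forces the restriction $W \to W_i$ to be an isomorphism, so by Proposition \ref{PROP:2}(1) we have $(E_i,V_i) = D((L_i,W_i))$. In particular $\dim V_i = \dim W_i^* = r+1$, and $E_i$ is the kernel bundle of $(L_i,W_i)$, which has rank $r$ and degree $d_i$, so $(E_i,V_i)$ really is of type $(r,d_i,r+1)$ on $C_i$. Condition (3) is then exactly the content of Proposition \ref{PROP:2}(1), which ensures $\alpha$-stability of $(E_i,V_i)$ for every $\alpha > (r-1)d_i$.

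Having verified all four hypotheses, Theorem \ref{THM:stab} yields $\wa$-stability of $(E,V)$ for every $\alpha > k\alpha_g = (r+1)\alpha_g$, which is precisely the desired conclusion. The only subtle point in the above is ensuring that $R_i = 0$ genuinely propagates from the ``line bundle side'' to the ``dual span side'': concretely, one needs the vanishing of $R_i$ to give both $\dim W_i = r+1$ (so that $(L_i,W_i)$ remains a coherent system of type $(1,d_i,r+1)$ with a nontrivial dual span) and the identification $(E_i,V_i) = D((L_i,W_i))$, and both of these are packaged into Proposition \ref{PROP:2}(1) via the commutative diagram \ref{EXDIAG:BIGEXDIAG}. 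Beyond that, the argument is a clean assembly of results already proved in Sections \ref{generated} and the current section.
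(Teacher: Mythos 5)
Your proposal is correct and follows essentially the same route as the paper: it reduces the statement to Theorem \ref{THM:stab} using the coprimality of $r$ and $r+1$, with Proposition \ref{Prop1} and Proposition \ref{PROP:2} supplying the facts that $(L_i,W_i)$ has type $(1,d_i,r+1)$, that $(E_i,V_i)=D((L_i,W_i))$ when $R_i=0$, and that these restrictions are $\alpha$-stable for $\alpha>(r-1)d_i$. The only difference is that you spell out explicitly the (automatic) generation of $(E,V)$ and the propagation of $R_i=0$ to the dual span side, which the paper leaves implicit.
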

\begin{proof}
Since $R_i = 0$ for any $i= 1, \dots \gamma$, by Proposition \ref{Prop1}   we have that $(L_i,W_i)$ is a generated coherent system of type $(1,d_i,r+1)$. By Proposition \ref{PROP:2},  its dual span is $(E_i,V_i)$,  it is a coherent system of type $(r,d_i,r+1)$ and it  is $\alpha$-stable for $\alpha > (r-1)d_i$.  As $r$ and $r+1$  are coprime, we can apply Theorem \ref{THM:stab}  and  conclude the proof.
\end{proof}


\section{Moduli spaces of coherent systems of type $(r,d,r+1)$}

Let $C$ be  a nodal curve   as in Section \ref{nodal curves} with $\gamma$ irreducible components. Let $L \in \Pic^{(d_1,\cdots, d_{\gamma})}(C)$ be a globally generated  line bundle on $C$. For any $r \geq 1$ consider   the Grassmannian variety  $\G(r+1,H^0(L))$, parametrizing $(r+1)$-dimensional subspace of $H^0(L)$.  For any subspace $W \in \G(r+1, H^0(L))$,  $(L,W)$ is a coherent system of multitype $({\underline 1},d,r+1)$ on the curve $C$, with $d = \sum_{i=1}^{\gamma}d_i$.  
\begin{proposition}
\label{PROP:3}
Let $C$ be a  nodal curve  as in Section \ref{nodal curves} and let $\w$ be a polarization on it.  Let $r \geq 1$ and 
 let $L \in \Pic^{(d_1,\cdots, d_{\gamma})}(C)$ be a  line bundle on $C$ with $d_i\geq \max(2g_i+1,g_i+r)$. Then, for a general $W \in \G(r+1, H^0(L))$ and 
 for $\alpha >(r+1)\alpha_g$  we have that $(L,W) \in \Gwa'({\underline 1},d,r+1)$ and its dual span $(E,V) \in \Gwa'({\underline r},d,r+1)$.
\end{proposition}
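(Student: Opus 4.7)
The plan is to exhibit a nonempty open subset $U\subseteq\G(r+1,H^0(L))$ on which $(L,W)$ is generated with $R_i=0$ for every $i=1,\dots,\gamma$, and then invoke Proposition \ref{Prop1}(2) and Theorem \ref{THM:1}. The degree hypothesis serves two distinct roles. Since $d_i\geq 2g_i+1$, Lemma \ref{LEM:h1} gives that $L$ is very ample, $h^1(L)=0$, and each restriction map $\rho_i\colon H^0(L)\to H^0(L_i)$ is surjective; in particular $\Ker\rho_i=H^0(L|_{C_i^c}(-\Delta_i))$ has codimension $h^0(L_i)$ in $H^0(L)$. The additional inequality $d_i\geq g_i+r$, together with $h^1(L_i)=0$, yields by Riemann--Roch on the smooth component $C_i$ the key estimate $h^0(L_i)=d_i+1-g_i\geq r+1$.

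For generatedness, I would observe that the locus in which a chosen $W$ fails to generate $L$ at a fixed point $p\in C$ is the sub-Grassmannian $\G(r+1,\Ker(H^0(L)\to L_p))$, of codimension $r+1$ in $\G(r+1,H^0(L))$; as $p$ varies over the one-dimensional $C$ this bad locus still has codimension at least $r\geq 1$. For the vanishing of $R_i$, a standard incidence-variety count (parameterizing a pair $([s],W)$ with $[s]\in\PP(\Ker\rho_i)$ and $s\in W$, and projecting onto $\PP(\Ker\rho_i)$ with fibers $\G(r,H^0(L)/\langle s\rangle)$) shows that $\{W:W\cap\Ker\rho_i\neq 0\}$ has codimension $h^0(L_i)-r$ in $\G(r+1,H^0(L))$, hence codimension at least $1$ under our numerical hypothesis. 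Intersecting these finitely many open conditions produces the desired dense open $U$.

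For $W\in U$ the pair $(L,W)$ is a generated coherent system of multitype $({\underline 1},d,r+1)$ with $R_i=0$ for each $i$, so Proposition \ref{Prop1}(2) implies that $(L,W)$ is $\wa$-stable for every $\alpha>(r+1)\alpha_g$; since $L$ is a line bundle this places $(L,W)\in\Gwa'({\underline 1},d,r+1)$. Theorem \ref{THM:1} then applies directly, producing $\wa$-stability of the dual span $(E,V)$ in the same range, and $E=(\Ker ev_W)^*$ is a rank-$r$ vector bundle by construction, so $(E,V)\in\Gwa'({\underline r},d,r+1)$. The only mildly technical ingredient is the Schubert-type dimension estimate in the second paragraph; once $h^0(L_i)\geq r+1$ is established, everything else is bookkeeping using results already proved.
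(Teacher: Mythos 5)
Your proposal is correct and follows essentially the same route as the paper: use Lemma \ref{LEM:h1} to get $h^1(L)=0$, global generation and surjectivity of $\rho_i$, note $h^0(L_i)=d_i+1-g_i\geq r+1$ so a general $W$ meets $\Ker\rho_i$ trivially and generates $L$, then conclude via Proposition \ref{Prop1} and Theorem \ref{THM:1}. The only difference is that you spell out the Grassmannian codimension counts for the two genericity claims, which the paper leaves implicit.
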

\begin{proof}
By Lemma \ref{LEM:h1}, since $d_i \geq 2g_i+1$, $h^1(L)=0$ and $L$ is globally generated. Since $r\geq 1$ we have $\dim(W)\geq 2$. Hence, for $W$ general in $\G(r+1,H^0(L))$, $(L,W)$ is generated. 
By Lemma \ref{LEM:h1} $\rho_i \colon H^0(L) \to H^0(L_i)$ is surjective and its kernel has dimension $$\dim \Ker(\rho_i)=h^0(L|_{C_i^{c}}(-\Delta_i))=h^0(L)-h^0(L_i).$$ 
Since by assumption, $r+1\leq d_i-g_i+1=h^0(L_i)$, then for a general $W$ we have that $ R_i = W \cap \Ker (\rho_i) = \{ 0 \}$. Hence we can conclude using Proposition \ref{Prop1} and  Theorem \ref{THM:1}.
\end{proof}

As a corollary  we have  the following:
\begin{theorem}
\label{THM:2}
 Let $C$ be a nodal curve  as in Section \ref{nodal curves} with $\gamma$ components and let ${\underline w}$ be a polarization on it. 
 For any  integer $r \geq 1$ and for any  integer $d \geq \max(2p_a(C) + \gamma, p_a(C) + r \gamma)$  we have  $\Gwa'({\underline r},d,r+1) \not= \emptyset$ for any $\alpha  > (r+1)\alpha_g$.
\end{theorem}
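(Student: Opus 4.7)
The strategy is to deduce the non-emptiness of $\Gwa'({\underline r},d,r+1)$ as a direct corollary of Proposition \ref{PROP:3}. More precisely, I would construct a globally generated line bundle $L$ on $C$ of suitable multidegree and then apply the dual span construction to a general $(r+1)$-dimensional subspace $W \subseteq H^0(L)$; Proposition \ref{PROP:3} would then guarantee that $(E,V) = D((L,W))$ lies in $\Gwa'({\underline r},d,r+1)$ for every $\alpha > (r+1)\alpha_g$, producing a point of the moduli space.

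The first step is to choose a multidegree $(d_1,\dots,d_\gamma)$ for $L$. Using $p_a(C)=\sum_{i=1}^\gamma g_i$, the hypothesis $d \geq \max(2p_a(C)+\gamma,\, p_a(C)+r\gamma)$ translates into the pair of inequalities $d \geq \sum_{i=1}^\gamma(2g_i+1)$ and $d \geq \sum_{i=1}^\gamma(g_i+r)$. From these I would extract integers $d_1,\dots,d_\gamma$ with $\sum_{i=1}^\gamma d_i = d$ and $d_i\geq \max(2g_i+1,\,g_i+r)$ for every $i$; concretely, one sets each $d_i$ equal to the prescribed lower bound $\max(2g_i+1,g_i+r)$ and then distributes any remaining surplus onto a single component.

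The second step invokes the isomorphism $\Pic(C) \simeq \bigoplus_{i=1}^\gamma \Pic(C_i)$ recalled in Section \ref{nodal curves} to pick any line bundle $L \in \Pic^{(d_1,\dots,d_\gamma)}(C)$. By the choice of multidegree, $L$ satisfies the numerical hypotheses of Proposition \ref{PROP:3}, so the latter gives a general $W \in \Gr(r+1,H^0(L))$ whose dual span $D((L,W))$ is an element of $\Gwa'({\underline r},d,r+1)$. This proves non-emptiness.

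The only delicate point of the argument is the combinatorial step of partitioning $d$ into components $d_i$ compatible with the individual bounds $\max(2g_i+1,g_i+r)$; once this is in place, the conclusion follows mechanically from Proposition \ref{PROP:3}. No further geometric input is needed, as all the hard work (stability of the dual span, behaviour of restrictions, and translation of $\alpha$-stability on components to $\wa$-stability on $C$) is already encapsulated in the results of Sections \ref{generated} and \ref{cohsystem}.
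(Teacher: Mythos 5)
Your proposal follows the paper's proof of Theorem \ref{THM:2} exactly: the paper's argument is precisely the one-line reduction ``choose $L\in\Pic^{(d_1,\dots,d_\gamma)}(C)$ with $d_i\geq\max(2g_i+1,g_i+r)$ and $\sum_{i=1}^{\gamma}d_i=d$, then apply Proposition \ref{PROP:3}'', which is what you do, with the choice of $L$ justified via $\Pic(C)\simeq\bigoplus_{i=1}^\gamma\Pic(C_i)$.

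One caveat on the step you yourself flag as the only delicate point, and which you state more explicitly than the paper does: the extraction of the $d_i$ is not always possible under the stated hypothesis. The hypothesis gives $d\geq\max\bigl(\sum_i(2g_i+1),\,\sum_i(g_i+r)\bigr)$, whereas the splitting you want requires $d\geq\sum_i\max(2g_i+1,\,g_i+r)$, and the second quantity can be strictly larger when the maximum is attained by different terms on different components (a sum of maxima exceeds the maximum of sums). For instance, with $\gamma=2$, $g_1=2$, $g_2=10$, $r=5$ one has $\max(2p_a(C)+\gamma,\,p_a(C)+r\gamma)=\max(26,22)=26$, while $\max(5,7)+\max(21,15)=28$; so $d=26$ satisfies the hypothesis of Theorem \ref{THM:2} but admits no splitting with $d_1\geq 7$ and $d_2\geq 21$. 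Since $g_i\geq 2$, this discrepancy can only occur for $r\geq 4$ with some $g_i<r-1$ and some $g_j>r-1$; if all $g_i\geq r-1$ or all $g_i\leq r-1$ the two bounds coincide and your construction goes through verbatim. This imprecision is inherited from the paper (its proof asserts the same splitting under the same hypothesis), so your write-up is faithful to the intended argument, but strictly speaking the theorem's numerical hypothesis should read $d\geq\sum_{i=1}^{\gamma}\max(2g_i+1,\,g_i+r)$ for the proof as given to apply.
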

\begin{proof}
It is enough to choose  a line bundle  $ L \in \Pic^{(d_1,\cdots, d_{\gamma})}(C)$ with $d_i\geq \max(2g_i+1,g_i+r)$ and $\sum_{i=1}^{\gamma}d_i = d$, then the assertion follows from Proposition \ref{PROP:3}. 
\end{proof}
Assume that $\Gwa'({\underline 1},d,r+1) \not= \emptyset$.  
For any $(d_1,\cdots,d_{\gamma}) \in {\mathbb Z}^\gamma$ with 
$\sum_{i=1}^{\gamma}d_i = d$,  
we define the following subscheme of $\Gwa'({\underline 1},d,r+1)$
\begin{equation}
\label{subschemeX}
X_{d_1,\dots,d_{\gamma}} = \{  (L,W) \in \Gwa'({\underline 1},d,r+1) \ \  \vert \ \  L_i \in \Pic^{d_i}(C_i) \}, 
\end{equation}
and consider its closure $\overline{X_{d_1,\dots,d_{\gamma}}} $ in $\Gwa({\underline 1},d,r+1)$.
Then we have the following:

\begin{theorem}
\label{Main1}
 Let $C$ be a nodal curve  as in Section \ref{nodal curves} with $\gamma$ components and let ${\underline w}$ be a polarization on it.
 Let $r \geq 1$ and $d_i \geq  \max(2g_i+1,g_i+r)$. 
 Then,  for any $\alpha > (r+1)\alpha_g$,   $\overline{X_{d_1,\dots,d_{\gamma}}} $ is an irreducible component of $\Gwa({\underline 1},d,r+1)$ of dimension  equal to the Brill-Noether number
 $$\beta_C(1,d,r+1)=  p_a(C)+(r+1)(d-r - p_a(C)).$$
 Moreover, each $(L,W) \in X_{d_1,\cdots,d_{\gamma}}$ is a smooth point of the moduli space. 
\end{theorem}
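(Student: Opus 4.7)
The plan is to establish two facts: (i) $X_{d_1,\dots,d_\gamma}$ is irreducible of dimension $\beta_C(1,d,r+1)$, and (ii) every point of $X_{d_1,\dots,d_\gamma}$ is a smooth point of $\Gwa({\underline 1},d,r+1)$ with local dimension $\beta_C(1,d,r+1)$. These two facts together imply the theorem: any irreducible closed subvariety of the moduli space that strictly contained $\overline{X_{d_1,\dots,d_\gamma}}$ would meet the smooth locus at a point where the local dimension is $\beta_C(1,d,r+1)$, yielding a contradiction; so $\overline{X_{d_1,\dots,d_\gamma}}$ is an irreducible component.

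For (i), I would realise $X_{d_1,\dots,d_\gamma}$ as an open subvariety of the Grassmannian bundle $\mathcal{X} \to \Pic^{(d_1,\dots,d_\gamma)}(C) \cong \prod_{i=1}^{\gamma} \Pic^{d_i}(C_i)$ whose fiber over $L$ is $\mathrm{Gr}(r+1,H^0(L))$. The degree hypothesis $d_i \geq 2g_i + 1$ together with Lemma \ref{LEM:h1}(2) gives $h^1(L) = 0$ for every $L$ in the Picard variety, so $h^0(L) = d + 1 - p_a(C)$ is constant by Riemann--Roch; hence $\mathcal{X}$ is locally trivial and irreducible, and Proposition \ref{PROP:3} guarantees that the stable locus $X_{d_1,\dots,d_\gamma} \subset \mathcal{X}$ is nonempty. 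Adding the dimensions of base and fiber yields
$$\dim X_{d_1,\dots,d_\gamma} = p_a(C) + (r+1)\bigl(d + 1 - p_a(C) - (r+1)\bigr) = \beta_C(1,d,r+1).$$

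For (ii), I would invoke Proposition \ref{Petritheorem}: smoothness at $(L,W)$ and the identification of the local dimension with $\beta_C(1,d,r+1)$ both follow as soon as the Petri map
$$\mu_{L,W} \colon W \otimes H^0(\omega_C \otimes L^{-1}) \to H^0(\omega_C)$$
is injective. Since $C$ is nodal, hence Gorenstein, Serre duality yields $h^0(\omega_C \otimes L^{-1}) = h^1(L)$, which vanishes by the very same Lemma \ref{LEM:h1}(2) used in (i). So the domain of $\mu_{L,W}$ is trivial and the Petri map is vacuously injective at \emph{every} $(L,W) \in X_{d_1,\dots,d_\gamma}$. There is essentially no genuine obstacle: the whole proof rests on the single cohomology vanishing $h^1(L) = 0$ afforded by the degree hypothesis, which is used first to control $h^0(L)$ in the dimension count of step (i) and then, through Serre duality on the Gorenstein curve $C$, to collapse the domain of the Petri map in step (ii).
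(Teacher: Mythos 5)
Your proof is correct, and it reaches the conclusion by a genuinely different route for the irreducibility and dimension statements (the smoothness argument, via the vacuous injectivity of the Petri map once $h^1(L)=0$ kills $H^0(\omega_C\otimes L^{-1})$, is exactly the paper's). The paper also fibers $X_{d_1,\dots,d_{\gamma}}$ over $\Pic^{d_1}(C_1)\times\cdots\times\Pic^{d_{\gamma}}(C_{\gamma})$ with fibers open in $\Gr(r+1,H^0(L))$, but it does not assemble these into a globally irreducible Grassmannian bundle; instead it decomposes $X_{d_1,\dots,d_{\gamma}}$ into irreducible components $Z_j$, singles out one component $Z_0$ dominating the Picard product (hence of dimension $\beta_C(1,d,r+1)$), and then eliminates every other $Z_j$ by a two-case analysis: components of dimension $\geq \dim Z_0$ are excluded because two components of the moduli space cannot share a smooth point, and components of smaller dimension are excluded using the constancy of the multidegree on irreducible families (Remark \ref{families}) applied to the moduli component containing $Z_j$. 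Your approach buys a shorter and more transparent argument: since $d_i\geq 2g_i+1$ forces $h^1(L)=0$ for \emph{every} $L$ of multidegree $(d_1,\dots,d_{\gamma})$, the direct image of a Poincar\'e bundle is locally free of constant rank $d+1-p_a(C)$, the relative Grassmannian is irreducible of dimension $\beta_C(1,d,r+1)$, and $X_{d_1,\dots,d_{\gamma}}$ is its nonempty (Proposition \ref{PROP:3}) open stable locus. The only steps worth making explicit are the openness of $\wa$-stability in the family carried by the Grassmannian bundle, and the fact that the induced classifying morphism to the coarse moduli space is injective with image $X_{d_1,\dots,d_{\gamma}}$ (automorphisms of a line bundle are scalars), so that irreducibility and dimension do transfer from the bundle to $X_{d_1,\dots,d_{\gamma}}$; both facts are standard and are used implicitly in the paper as well.
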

\begin{proof}
Since we assumed $d_i \geq \max(2g_i+1,g_i+r)$, then by Proposition \ref{PROP:3} we have that 
$X_{d_1,\dots,d_{\gamma}} \not= \emptyset$. Let $(L,W) \in X_{d_1,\dots,d_{\gamma}}$. Then as we have seen before, we have $h^1(L) = 0$, so the Petri map
$$\mu_{L,W} \colon W \otimes H^0({\omega}_C \otimes L^*) \to H^0({\omega}_C \otimes L \otimes L^*) $$
is injective. By Theorem \ref{Petritheorem},  any $(L,W) \in X_{d_1,\cdots,d_{\gamma}} $ is a smooth point of the moduli space $\Gwa(1,d,r+1)$
and the dimension of $\Gwa(1,d,r+1)$ at $(L,W)$ is given by the Brill-Noether number
$$\beta_C(1,d,r+1) = p_a(C) + (r+1)(d-r-p_a(C)).$$
In order to prove the assertion,  we can consider the  natural  morphism
$$\pi \colon X_{d_1,\dots,d_{\gamma}} \to \Pic^{d_1}(C_1) \times \dots \times \Pic^{d_{\gamma}}(C_{\gamma}),$$
sending $(L,W) \to (L_1,\dots,L_{\gamma}).$
We recall that, since $C$ is a curve of compact type, we have an isomorphism 
$$\Pic^{d_1}(C_1) \times \cdots \times  \Pic^{d_{\gamma}}(C_{\gamma}) \simeq \Pic^{(d_1,\cdots,d_{\gamma})}(C).$$
By Proposition \ref{PROP:3}, $\pi$ is surjective and  each fiber 
$\pi^{-1}(L_1,\cdots,L_{\gamma})$ is an open subset of the Grassmannian variety  $\Gr(r+1,H^0(L))$, where $L \in \Pic^{(d_1,\cdots,d_{\gamma})}(C)$ is the unique  line bundle on the curve $C$  corresponding to $(L_1,\cdots,L_{\gamma})$. Hence all the fibers of $\pi$ are  irreducible  and equidimensional of dimension 
$(r+1)(h^0(L) -r-1)= (r+1)(d-r-p_a(C))$.

We will denote by $Z_i$ the irreducible components of $X_{d_1,\cdots,d_{\gamma}}$.  Since all fiber are irreducible and equidimensional,
we have
$$Z_j= \bigcup {\pi}^{-1}(L_1,\cdots,L_{\gamma}), \qquad (L_1,\cdots,L_{\gamma}) \in \pi(Z_j)$$
so $\dim Z_j = \dim \pi(Z_j) + (r+1)(d-r-p_a(C))$.
\hfill\par
Since $\pi$ is surjective, 
there is an irreducible component $Z_0$ of $X_{d_1,\cdots,d_{\gamma}}$ such that the restriction  $\pi\vert_{Z_0}$  is dominant. Hence we have 
$$ \dim Z_0= p_a(C) + (r+1)(d-r-p_a(C)) = \beta_C(1,d,r+1). $$
 This implies that the closure $\overline{Z_0}$
is an irreducible component of $\Gwa({\underline 1},d,r+1)$. 
\hfill\par
In order to prove that $X_{d_1,\cdots,d_{\gamma}}$ is a component of the moduli space as claimed, it is enough to show that $Z_0 = X_{d_1,\cdots,d_{\gamma}}$. First of all we will prove that  for any other possible irreducible component $Z_j$ of $X_{d_1,\dots,d_{\gamma}}$  we have $\dim Z_j < \dim Z_0$. 
Indeed, otherwise, $\overline{Z_j}$ would be an irreducible component of the moduli space $\Gwa({\underline 1},d,r+1)$, moreover since  $\pi(Z_j) $ would be an open subset of  $\Pic^{d_1}(C_1) \times \dots \times \Pic^{d_{\gamma}}(C_{\gamma})$ we would have  $Z_j \cap Z_0 \not= \emptyset$. This is impossible since all  points of 
$Z_j$ and $Z_0$ are smooth points of the moduli space and thus they cannot be common to two irreducible components.
\hfill\par
Assume that $Z_j$ is an irreducible component of 
$X_{d_1,\cdots,d_{\gamma}}$ different from $Z_0$. Let $Y_j$ be the unique irreducible component of $\Gwa({\underline 1},d,r+1)$ containing $Z_j$. By construction the dimension of $Y_j$ is equal to the Brill-Noether number (as it contains $Z_j$ whose points are smooth for the moduli space) and  it is strictly bigger then the dimension of $Z_j$. By the same argument as above we have $Y_j \cap Z_0 = \emptyset$. 

Consider the intersection $U_j=Y_j \cap \Gwa'({\underline 1},d,r+1)$. It is a non empty open subset of $Y_j$ since it contains $Z_j$. 
Then the generic point of $U_j$ cannot lie on $X_{d_1,\dots, d_{\gamma}}$ as $U_j$ is disjoint from $Z_0$. This implies that $U_j$ contains coherent systems with different multidegrees which is impossible as $Y_j$ is irreducible (see Remark \ref{families}).     
\end{proof}
 
By the dual span construction we obtain the following result:

\begin{theorem}
\label{main2}
 Let $C$ be a nodal curve  as in Section \ref{nodal curves} with $\gamma$ components and let ${\underline w}$ be a polarization on it.
Let $r \geq 1$,  $d_i \geq  \max(2g_i+1,g_i+r)$ and set  $d= \sum_{i=1}^{\gamma} d_i$.  
 Then,  for any $\alpha > (r+1)\alpha_g$,   there exists  an irreducible component  $Y_{d_1,\cdots, d_{\gamma}}$ of $\Gwa({\underline r},d,r+1)$  which is birational to $X_{d_1,\dots,d_{\gamma}}$, of dimension  equal to the Brill-Noether number
 $$\beta_C(r,d,r+1)=  p_a(C)+(r+1)(d-r - p_a(C)).$$
Moreover,  a general $(E,V) \in Y_{d_1,\cdots,d_{\gamma}}$ satisfies  the following properties:
 \begin{enumerate}
 \item{}  it is a generated coherent system  with $E$ locally free and $\deg (E_i) = d_i$, $i= 1,\cdots, \gamma;$
 \item{} it is a smooth   point of the moduli space $\Gwa({\underline r},d,r+1)$;
 \item{} for any $i = 1, \cdots, \gamma$ the restriction $(E_i,V_i)$ is a generated  coherent system  on $C_i$ of type $(r,d_i,r+1)$ which is $\alpha$-stable for any $\alpha > (r-1)d_i$.
 \end{enumerate}
 \end{theorem}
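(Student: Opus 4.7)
The plan is to construct $Y_{d_1,\ldots,d_\gamma}$ as (the closure of) the image of $X_{d_1,\ldots,d_\gamma}$ under the dual span map $\mathcal{D}$ and to check all required properties in that order.

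First, by Proposition \ref{PROP:3} and Theorem \ref{THM:1}, for every $(L,W)\in X_{d_1,\ldots,d_\gamma}$ the hypotheses $d_i\geq\max(2g_i+1,g_i+r)$ force $R_i=0$ for all $i$, so the dual span $(E,V)=D((L,W))$ is a well-defined $\wa$-stable coherent system in $\Gwa'(\underline{r},d,r+1)$ for $\alpha>(r+1)\alpha_g$. This produces a morphism $\mathcal{D}\colon X_{d_1,\ldots,d_\gamma}\to \Gwa'(\underline{r},d,r+1)$, which is injective by Remark \ref{REM:Injective}. Set $Y_{d_1,\ldots,d_\gamma}$ to be the closure of $\mathcal{D}(X_{d_1,\ldots,d_\gamma})$ inside $\Gwa(\underline{r},d,r+1)$.

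Second, Theorem \ref{Main1} says $X_{d_1,\ldots,d_\gamma}$ is irreducible of dimension $\beta_C(1,d,r+1)$, hence so is $Y_{d_1,\ldots,d_\gamma}$, and $\mathcal{D}$ is birational onto its image. A direct expansion gives
\[
\beta_C(1,d,r+1)=\beta_C(r,d,r+1)=p_a(C)+(r+1)(d-r-p_a(C)),
\]
so the dimension assertion is automatic once we prove that $Y_{d_1,\ldots,d_\gamma}$ is an irreducible component of the moduli space.

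Third, to promote $Y_{d_1,\ldots,d_\gamma}$ from an irreducible closed subvariety to an irreducible component, I will invoke Proposition \ref{Petritheorem}: it suffices to exhibit one point $(E,V)\in \mathcal{D}(X_{d_1,\ldots,d_\gamma})$ whose Petri map
\[
\mu_{E,V}\colon V\otimes H^0(\omega_C\otimes E^*)\to H^0(\omega_C\otimes E\otimes E^*)
\]
is injective; this forces the local dimension of $\Gwa(\underline{r},d,r+1)$ at $(E,V)$ to equal $\beta_C(r,d,r+1)$, matching the dimension of $Y_{d_1,\ldots,d_\gamma}$, so $Y_{d_1,\ldots,d_\gamma}$ cannot be properly contained in a larger component. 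The natural way to get Petri injectivity is Butler's comparison: tensoring the defining sequence $0\to L^{-1}\to W^*\otimes\OO_C\to E\to 0$ with $\omega_C\otimes E^*$ and using $h^1(L)=0$ (granted by Lemma \ref{LEM:h1} since $d_i\geq 2g_i+1$), one reduces the injectivity of $\mu_{E,V}$ to that of
\[
\mu_{L,W}\colon W\otimes H^0(\omega_C\otimes L^{-1})\to H^0(\omega_C),
\]
which is available at every point of $X_{d_1,\ldots,d_\gamma}$ by Theorem \ref{Main1}. The main obstacle here is transporting this classical argument to the reducible, nodal setting: one has to handle the cohomology of $\omega_C\otimes E\otimes E^*$ and $\omega_C\otimes E^*\otimes L^{-1}$ on $C$, checking that the snake-lemma/diagram chase that yields the implication on smooth curves still closes when $C$ has nodes; the compact-type hypothesis and the vanishing $h^1(L)=0$ are the inputs that should make this go through.

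Finally, the three listed properties of a general $(E,V)\in Y_{d_1,\ldots,d_\gamma}$ follow immediately from the construction: (1) is the definition of the dual span, where $E$ is locally free with $\det E=L$, so $\deg(E_i)=\deg(L_i)=d_i$; (2) is Proposition \ref{Petritheorem} applied to the Petri injectivity established above; and (3) is precisely Proposition \ref{PROP:2}(1), which tells us that when $R_i=0$ the restriction $(E_i,V_i)$ coincides with the dual span $D((L_i,W_i))$ on the smooth curve $C_i$, is generated of type $(r,d_i,r+1)$, and is $\alpha$-stable for every $\alpha>(r-1)d_i$.
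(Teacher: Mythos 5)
Your overall route is the paper's own: take the closure of the image of the dual span map on $X_{d_1,\dots,d_{\gamma}}$, get irreducibility and the dimension from Theorem \ref{Main1} together with $\beta_C(1,d,r+1)=\beta_C(r,d,r+1)$, promote $Y_{d_1,\dots,d_{\gamma}}$ to an irreducible component via injectivity of the Petri map at points of the image, and read off property (3) from Proposition \ref{PROP:2}. Two steps, however, are not right as written. First, it is false (or at least unjustified) that every $(L,W)\in X_{d_1,\dots,d_{\gamma}}$ is generated with $R_i=0$: Proposition \ref{PROP:3} asserts this only for a \emph{general} $W\in\Gr(r+1,H^0(L))$, and $\wa$-stability for large $\alpha$ yields generic generation, not generation, and does not force $W\cap H^0(L|_{C_i^c}(-\Delta_i))=0$. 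So $\mathcal{D}$ is only a rational map, defined on the dense open subset $\mathcal{U}$ of generated pairs with $R_i=0$ for all $i$, and $Y_{d_1,\dots,d_{\gamma}}$ must be taken as the closure of $\mathcal{D}(\mathcal{U})$; this is harmless for the statement (which concerns birationality and general points), but your claim as phrased does not hold. Second, you assert without argument that the pointwise assignment $(L,W)\mapsto D((L,W))$ ``produces a morphism''. Since $\Gwa(\underline{r},d,r+1)$ is only a coarse moduli space, this requires carrying out the dual span construction in families: for a family $(\mathcal{L},\mathcal{W},\xi)$ parametrized by $T$ with all members in $\mathcal{U}$, one checks that $\xi$ is a surjection of locally free sheaves on $C\times T$, so $\Ker\xi$ is locally free, and dualizing yields a family $(\mathcal{E},\pi^*\mathcal{W}^*,\eta^*)$ of $\wa$-stable coherent systems of multitype $(\underline{r},d,r+1)$. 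This verification is the bulk of the paper's proof and is what legitimizes both the morphism claim and, with Remark \ref{REM:Injective}, birationality onto the image.

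On the Petri step, your argument is correct and in fact simpler than you fear: all sheaves involved are locally free and $\omega_C$ is invertible ($C$ is of compact type, hence Gorenstein), so the sequences behave exactly as on a smooth curve. Concretely, tensoring the defining sequence with $\omega_C\otimes E^*$ gives $\ker\mu_{E,V}\simeq H^0(\omega_C\otimes E^*\otimes L^{-1})$, and tensoring it with $L$ gives a surjection $V\otimes H^1(L)\to H^1(E\otimes L)$, so $h^1(L)=0$ (Lemma \ref{LEM:h1}, using $d_i\geq 2g_i+1$) forces $H^1(E\otimes L)=0$ and hence, by Serre duality, $H^0(\omega_C\otimes E^*\otimes L^{-1})=0$; equivalently, since $H^0(\omega_C\otimes L^{-1})\simeq H^1(L)^*=0$, the reduction to $\mu_{L,W}$ is vacuous. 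There is no delicate nodal diagram chase left to close, so once the two points above are repaired your proof coincides with the paper's.
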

\begin{proof}
We consider the irreducible component 
$\overline{X_{d_1,\cdots,d_{\gamma}}}$ of $\Gwa({\underline 1},d,r+1)$ 
described in Theorem \ref{Main1}. 
Let $(L,W) \in X_{d_1,\cdots,d_{\gamma}}$, assume that it is  a generated  coherent system, then  we can define its dual span $D((L,W))=(E,V)$, see Definition \ref{defi:dualspan}.  It is a generated  coherent system of multitype 
$({\underline r},d,r+1)$ and  $E$ is  locally  free.  If moreover, $R_i= 0$ for any $i= 1,\dots,\gamma$, then by Theorem \ref{THM:1},    it follows that 
$(E,V) \in  \Gwa({\underline r},d,r+1)$, for any $\alpha > (r+1)\alpha_g$.  This allows us to define a map
$$
\xymatrix{
\mathcal{D} : \overline{X_{d_1,\cdots,d_{\gamma}}} \ar@{-->}[r] & \Gwa({\underline r},d,r+1),
}$$
sending $(L,W) \to D((L,W))$. 
Actually,   $\mathcal D$ is defined on the   subset
$$\mathcal U = \{ (L,W) \in X_{d_1,\cdots, d_{\gamma}} \vert \ (L,W) \mbox{ generated and } R_i = 0, \ i = 1,\dots,\gamma \},$$
which is a non empty open subset by Proposition \ref{PROP:3}. 
We will prove that the restriction $\mathcal{D}_{\vert \mathcal U}$ is a birational morphism onto its image $\mathcal{D}({\mathcal U})$. 
Let $({\mathcal L},{\mathcal W},\xi)$ be a family of coherent systems of ${\mathcal U}$ parametrized by a a connected scheme $T$. Then,  
$\mathcal L$ is a locally free sheaf on $C \times T$ such that ${\mathcal L}_{\vert C_i \times t}  \in \Pic^{d_i}(C_i)$. Let $\pi \colon C \times T \to T$ be the projection,  $\mathcal W$ is a locally free sheaf on $T$  and $\xi \colon \pi^* \mathcal W \to \mathcal L$ is a map of locally free sheaves such that for any $t \in T$ the map
$$ \xi_t : W_t \otimes \OO_{C \times t} \to L_t$$
induces the following injective map $H^0(\xi_t) \colon W_t \to H^0(L_t)$.
Since $(L_t,W_t)$ is generated, $\xi_t$ is surjective for any $t \in T$,  then it follows that  $\xi$ is a surjective map of locally free sheaves on $C \times T$. We can consider its kernel $\Ker \xi$, it is a locally free sheaf on $C \times T$. We have then an exact sequence of locally free sheaves on $C\times T$
$$0 \to \Ker(\xi)\xrightarrow{\eta}\pi^*\mathcal{W}\xrightarrow{\xi}\mathcal{L}\to 0.$$
and its dual
$$0 \to\mathcal{L}^* \xrightarrow{\xi^*}\pi^*\mathcal{W}^*\xrightarrow{\eta^*} \mathcal E\to 0,$$
where we have denoted by $\mathcal{E}$ the sheaf $\Ker(\xi)^*$. This implies that, for all $t\in T$ the map
$$\eta^*_t:W_t^* \otimes \OO_{C \times t} \to E_t$$ 
is surjective and the map 
$H^0(\eta_t^*) \colon W_t^* \to H^0(E_t)$ is injective. 
This implies that  $({\mathcal E}, {\pi^*\mathcal W}^*, \eta^*)$ is a family of generated coherent systems of multitype $({\underline r},d,r+1)$, which are $\wa$-stable for any $\alpha > (r+1)\alpha_g.$
This ensure that the map $\mathcal{D}|_{\mathcal U}$ is a morphism and it is injective by construction, see remark \ref{REM:Injective}.  This proves that $\mathcal{D}$ is a birational map onto its image.  
We denote by $Y_{d_1,\cdots,d_{\gamma}}$ the closure of $\mathcal{D}({\mathcal U})$ in $\Gwa({\underline r},d,r+1)$. 
It is an irreducible subscheme of 
$\Gwa({\underline r},d,r+1)$ of dimension $\beta_C(1,d,r+1) = p_a(C) + (r+1)(d-r-p_a(C))$. 
Note that we have
$$\beta_C(1,d,r+1) = \beta_C(r,d,r+1),$$
so, in order to prove the assertion, we will   show  that  for each coherent system $(E,V) \in {\mathcal D}({\mathcal U})$ the Petri map 
$$\mu_{E,V} \colon V \otimes H^0({\omega}_C \otimes E^*) \to H^0({\omega}_C \otimes E \otimes E^*)$$
is injective.
Consider the exact sequence defining $(E,V)$, i.e.
$$ 0 \to L^{-1} \to V \otimes O_C \to E \to 0$$
and tensor it with $L$. This yields a surjective map $V\otimes H^1(L)\to H^1(E\otimes L)$. Under our assumptions we have $h^1(L)=0$ by Proposition \ref{PROP:3} so  $H^1(E\otimes L)=0$ too. In particular, $H^0(\omega_C\otimes E^*\otimes L^{-1})=0$ by Serre duality.
If we consider again the above exact sequence and tensor it with $E^*\otimes \omega_C$ and take cohomology we obtain
$$0\to H^0(\omega_C\otimes E^*\otimes L^{-1})\to V\otimes H^0(\omega_C\otimes E^*)\xrightarrow{\mu_{E,V}} H^0(\omega_C\otimes E\otimes E^*) \to \cdots$$
which implies that $\mu_{E,V}$ is indeed injective as claimed.
\end{proof}

In what follows we will denote by $\mathcal{G}_{C_i,\alpha}(s,d_i,r+1)$ the moduli space of $\alpha$-stable coherent systems of type $(s,d_i,r+1)$ on the curve $C_i$. Assume that it is not empty and denote by $D_i$ the map
$$
\xymatrix{
\mathcal{G}_{C_i,\alpha}(1,d_i,r+1)\ar@{-->}[r]^-{D_i} & \mathcal{G}_{C_i,\alpha}(r,d_i,r+1).
}
$$
sending a generated coherent system $(L_i,W_i)$ to its dual span $D_i((L_i,W_i))$. The map $D_i$ is birational, see \cite[Corollary 5.10]{BGMN}. 

Let $\overline{X_{d_1,\cdots,d_{\gamma}}}$ and $Y_{d_1,\dots,d_\gamma}$ be the irreducible components of $\Gwa(\underline{1},d,r+1)$ and $\Gwa(\underline{1},d,r+1)$ respectively described in 
Theorems \ref{Main1} and \ref{main2}. We can consider the diagram
\begin{equation}
\label{DIAG:D}
\xymatrix@C=50pt{
\overline{X_{d_1,\cdots,d_{\gamma}}}\ar@{-->}[r]^-{\mathcal{D}}\ar@{-->}[d]_{\pi_1} &     Y_{d_1,\dots,d_\gamma}\ar@{-->}[d]^{\pi_2} \\
  \Pi_{i=1}^{\gamma} \mathcal{G}_{C_i,\alpha}(1,d_i,r+1)\ar@{-->}[r]_-{\Pi D_i} & 
  \Pi_{i=1}^{\gamma} \mathcal{G}_{C_i,\alpha}(r,d_i,r+1)
}
\end{equation}
where the vertical maps  ${\pi_1}$  and ${\pi_2}$ are  restrictions to the components of the curve $C$.

Then we have the following result:

\begin{theorem}
\label{THM:B}
In the hypothesis of Theorem \ref{main2}. 
The diagram \ref{DIAG:D} is commutative and the maps ${\pi_1}$  and ${\pi_2}$ are both dominant. Moreover, the general fiber of $\pi_i$ has dimension $\delta\cdot r(r+1)$,  where $\delta$ denotes the number of nodes on the curve $C$. 
\end{theorem}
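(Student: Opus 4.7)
The plan is to handle commutativity first, then establish dominance and the fibre dimension for $\pi_1$ by a direct construction, and finally transfer everything to $\pi_2$ via the birationality of the horizontal maps. For commutativity, on the non-empty open subset $\mathcal{U}\subseteq X_{d_1,\dots,d_\gamma}$ introduced in the proof of Theorem \ref{main2}, every $(L,W)$ is generated and satisfies $R_i=0$ for all $i$. Proposition \ref{PROP:2}(1) then gives $D((L,W))|_{C_i}=D_i((L_i,W_i))$, so $\pi_2\circ\mathcal{D}=(\prod_i D_i)\circ\pi_1$ on $\mathcal{U}$, yielding rational commutativity of Diagram \ref{DIAG:D}.

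For the dominance of $\pi_1$, I would reconstruct $(L,W)$ from a generic tuple $((L_i,W_i))_{i=1}^{\gamma}$ in the product as follows. Since $C$ is of compact type, $\Pic(C)\simeq\bigoplus_i\Pic(C_i)$ produces a unique $L\in\Pic^{(d_1,\dots,d_\gamma)}(C)$. The hypothesis $d_i\geq 2g_i+1$ together with Lemma \ref{LEM:h1} gives $h^1(L)=0$ and the surjectivity of each $\rho_i:H^0(L)\to H^0(L_i)$. The joint restriction $\rho:H^0(L)\hookrightarrow\bigoplus_i H^0(L_i)$ is injective because $L$ is locally free, so its image $\Pi$ has dimension $h^0(L)=d+1-p_a(C)$ and codimension $\delta$ inside $\bigoplus_i H^0(L_i)$. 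The cokernel $T=\bigoplus_i H^0(L_i)/\Pi$ coming from the normalization sequence for $L$ splits as $\bigoplus_{p=C_i\cap C_j} L_p$, the component at $p$ being the difference-of-evaluations map $W_i\oplus W_j\to L_p\simeq\mathbb{C}$. For a generic choice of the $W_i$ this composition is surjective at every node, so $\prod_i W_i+\Pi=\bigoplus_i H^0(L_i)$ and hence $\dim\bigl(\Pi\cap\prod_i W_i\bigr)=(r+1)\gamma-\delta=r\gamma+1$. A general $(r+1)$-dimensional subspace $W\subseteq\Pi\cap\prod_i W_i$ then projects onto every $W_i$ (so $R_i=0$ automatically) and, being generated as $(L,W)$, provides an element of $\mathcal{U}$ in the fibre. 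The fibre of $\pi_1$ over a general point is therefore birational to the Grassmannian $\Gr(r+1,r\gamma+1)$, of dimension $(r+1)r(\gamma-1)=\delta\cdot r(r+1)$; this agrees with the difference $\dim X_{d_1,\dots,d_\gamma}-\sum_i\dim\mathcal{G}_{C_i,\alpha}(1,d_i,r+1)$ obtained from the Brill-Noether numbers via Theorem \ref{Main1}.

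The statement for $\pi_2$ is then automatic: Theorem \ref{main2} shows that $\mathcal{D}$ is birational onto $Y_{d_1,\dots,d_\gamma}$, and each $D_i$ is birational by \cite[Corollary 5.10]{BGMN}, so the commutativity of Diagram \ref{DIAG:D} transforms the assertion for $\pi_1$ into the same assertion for $\pi_2$. The main technical step in the whole argument is the generic surjectivity of the composition $\prod_i W_i\to T$: it reduces to checking that at each node $p=C_i\cap C_j$ at least one of $W_i,W_j$ does not lie entirely in $H^0(L_i(-p))$, respectively $H^0(L_j(-p))$, which is an open condition in $\prod_i\Gr(r+1,H^0(L_i))$ and is non-empty because each generic $(L_i,W_i)$ is generated.
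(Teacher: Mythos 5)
Your proof is correct and follows essentially the same route as the paper: both arguments rest on the exact sequence $0 \to H^0(L) \to \bigoplus_i H^0(L_i) \to \mathbb{C}^{\delta} \to 0$, the surjectivity of $\bigoplus_i W_i \to \mathbb{C}^{\delta}$, and the identification of the general fibre of $\pi_1$ with a non-empty open subset of $\Gr(r+1,S)$, where $S = H^0(L) \cap \prod_i W_i$ has dimension $r\delta + r + 1$, with the $\pi_2$ statement transferred via the birationality of $\mathcal{D}$ and $\prod_i D_i$. The only substantive difference is that you compute the fibre over a \emph{generic point of the target}, which obliges you to verify in addition that $S$ surjects onto each $W_i$ and that $\prod_i W_i \to \mathbb{C}^{\delta}$ is surjective (both true, but the latter needs the tree structure of the dual graph, not merely the nonvanishing of each nodal coordinate map separately as you state), whereas the paper computes the fibre over $\pi_1((L,W))$ for a point $(L,W)$ already in $\mathcal{U}$, so the relevant open subset of $\Gr(r+1,S)$ is non-empty for free, and dominance then follows from the dimension count.
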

\begin{proof}
Let $\mathcal U \subset  X_{d_1,\cdots,d_{\gamma}}$ be the open subset where $\mathcal D$ is defined. 
Note that if $(L,W) \in \mathcal U$, then then  by Theorems \ref{Main1} and \ref{main2}, we have:  $D((L,W))= (E,V) \in Y_{d_1,\cdots,d_{\gamma}}$,    the restrictions $(E_i,V_i)$ are  $\alpha$-stable coherent systems of type $(r,d_i,r+1)$ for any $\alpha >(r-1)d_i$, $(L_i,W_i)$ are coherent systems of type $(1,d_i,r+1)$   and we have $D_i((L_i,W_i)) = (E_i,V_i)$. 
Hence, the diagram commutes  and in particular we have that both $\mathcal{G}_{C_i,\alpha}(1,d_i,r+1)$ and $\mathcal{G}_{C_i,\alpha}(r,d_i,r+1)$ are non empty. 
\hfill\par
Since $D_i$ is birational,  see \cite{BGMN}, we have:
$$\dim \mathcal{G}_{C_i,\alpha}(r,d_i,r+1)=  \dim \mathcal{G}_{C_i,\alpha}(1,d_i,r+1) = \beta_{C_i}(1,d_i,r+1)= g_i + (r+1)(d_i-g_i-r).$$
This implies:
\begin{multline*}
\dim \Pi_{i=1}^{\gamma} \mathcal{G}_{C_i,\alpha}(1,d_i,r+1) = \dim \Pi_{i=1}^{\gamma} \mathcal{G}_{C_i,\alpha}(r,d_i,r+1)=\\
=p_a(C) + (r+1)(d-p_a(C) -r) - (r+1)r\delta
\end{multline*}
where $\delta = \gamma -1, $
 denotes the number of nodes of $C$. 
\hfill\par
We will prove  that $\pi_1$ is dominant. Since 
it is a  rational map  between two irreducible  varieties  and  $\dim X_{d_1,\cdots,d_{\gamma}} - \dim  \Pi_{i=1}^{\gamma} \mathcal{G}_{C_i,\alpha}(1,d_i,r+1)= r(r+1)\delta$, then  it is enough to show that a general fiber has dimension $r(r+1)\delta$. 
Let $(L,W)\in \mathcal U$. We will compute the dimension of the fiber $\mathcal F$ of $\pi_1$ over $\pi_1((L,W))$. Let  $(L_i,W_i)$  be the restrictions of $(L,W)$ to the components of $C$. Since there is a unique line bundle on the curve $C$ having restrictions $L_i$, then we have:
$$\mathcal F = \{ (L,W') \in \mathcal U \quad  \vert\quad  {W'}|_{C_i} = W_i \}.$$
Note that $\mathcal F \not= \emptyset$ since $(L,W) \in {\mathcal F}$. 
We recall that we have an exact sequence as follows, see Lemma \ref{LEM:chi}:
$$ 0 \to L \to \bigoplus_{i=1}^{\gamma}L_i \to {T} \to 0$$
where ${T}=\bigoplus_{j=1}^{\delta}\mathbb{C}_{p_{j}}$. As $h^1(L)=0$ we also have the exact sequence
$$0\to H^0(L)\to \bigoplus_{i=1}^{\gamma}H^0(L_i)\xrightarrow{\alpha} \mathbb{C}^{\delta}\to 0,$$
where $\alpha$  at the node $p_{j} = C_{j1} \cap C_{j2}$ is the map sending 
$(s_1,\cdots,s_{\gamma}) \to s_{j1}(p_j)-s_{j2}(p_j)$.
We can consider the restriction $\alpha'$
 of $\alpha$ to $\bigoplus_{i=1}^{\gamma} W_i$.  It is a surjetive map, since $(L_i,W_i)$ is generated. So we have: 
$$0\to S \to \bigoplus_{i=1}^{\gamma}W_i\xrightarrow{\alpha'} \mathbb{C}^{\delta}\to 0$$
and $\dim S =\sum_{i=1}^{\gamma} W_i - \delta = \delta r + r+1$.
So we have:
$$ \mathcal F \simeq  \{ W'\in \Gr(r+1,S) \ \vert  \  (L,W') \in \mathcal U \}.$$
Since  by Proposition  \ref{PROP:3} the subset 
$\{ W'  \in \Gr(r+1,H^0(L)) \,|\,  (L,W') \in \mathcal{U} \}$
is  a non empty  open subset  of the variety $\Gr(r+1,H^0(L))$, then $\mathcal F$  is a non empty open subset of $\Gr(r+1,S)$. Hence $\dim \mathcal F = \dim \Gr(r+1,S) = \delta r(r+1)$ as claimed. This concludes the proof. 
\end{proof}

\begin{bibdiv}
\begin{biblist}

\bib{ACG}{book}{
author={Arbarello, Enrico},
author={Cornalba, Maurizio},
author={Griffiths, Phillip A.},
title={Geometry of Algebraic curves},
volume={II},
year={2011},
}

\bib{A79}{article}{
author={Altman, A.S.},
author={Kleiman, S.L.},
title={Bertini theorems for hypersurface sections containing a subscheme},
Journal={Comm.Algebra},
Volume={8},
year={1979},
pages={775-790.}
}

\bib{B1}{article}{
   author={Ballico, E.},
   title={Stable coherent systems on integral projective curves: an
   asymptotic existence theorem},
   journal={Int. J. Pure Appl. Math.},
   volume={27},
   date={2006},
   number={2},
   pages={205--214},
   issn={1311-8080},
   review={\MR{2220018}},
}

\bib{B2}{article}{
   author={Ballico, E.},
   title={Non-locally free stable coherent systems on integral projective
   curves: an asymptotic existence theorem},
   journal={Int. J. Pure Appl. Math.},
   volume={29},
   date={2006},
   number={2},
   pages={201--204},
   issn={1311-8080},
}

\bib{B}{article}{
   author={Bertram, Aaron},
   title={Stable pairs and stable parabolic pairs},
   journal={J. Algebraic Geom.},
   volume={3},
   date={1994},
   number={4},
   pages={703--724},
   issn={1056-3911},
}

\bib{BBN}{article}{
   author={Bhosle, U. N.},
   author={Brambila-Paz, L.},
   author={Newstead, P. E.},
   title={On coherent systems of type $(n,d,n+1)$ on Petri curves},
   journal={Manuscripta Math.},
   volume={126},
   date={2008},
   number={4},
   pages={409--441},
   issn={0025-2611},
   doi={10.1007/s00229-008-0190-y},
}

\bib{BD}{article}{
   author={Bradlow, Steven B.},
   author={Daskalopoulos, Georgios D.},
   title={Moduli of stable pairs for holomorphic bundles over Riemann
   surfaces},
   journal={Internat. J. Math.},
   volume={2},
   date={1991},
   number={5},
   pages={477--513},
   issn={0129-167X},
   review={\MR{1124279}},
   doi={10.1142/S0129167X91000272},
}



 \bib{BB}{article}{
   author={Bolognesi, M.},
  author={Brivio, S.},
  title={Coherent systems and modular subavrieties of $\mathscr{SU}_C(r)$},
   journal={Internat. J. Math.},
  volume={23},
  date={2012},
  number={4},
  pages={1250037, 23},
  issn={0129-167X},
  doi={10.1142/S0129167X12500371},
}

\bib{BGMN}{article}{
   author={Bradlow, S. B.},
   author={Garc\'{\i}a-Prada, O.},
   author={Mu\~{n}oz, V.},
   author={Newstead, P. E.},
   title={Coherent systems and Brill-Noether theory},
   journal={Internat. J. Math.},
   volume={14},
   date={2003},
   number={7},
   pages={683--733}
}

\bib{BGN}{article}{
   author={Brambila-Paz, L.},
   author={Grzegorczyk, I.},
   author={Newstead, P. E.},
   title={Geography of Brill-Noether loci for small slopes},
   journal={J. Algebraic Geom.},
   volume={6},
   date={1997},
   number={4},
   pages={645--669},
   issn={1056-3911},
}

\bib{Bho}{article}{
   author={Bhosle, Usha N.},
   title={Coherent systems on a nodal curve},
   conference={
      title={Moduli spaces and vector bundles},
   },
   book={
      series={London Math. Soc. Lecture Note Ser.},
      volume={359},
      publisher={Cambridge Univ. Press, Cambridge},
   },
   date={2009},
   pages={437--455},
}


\bib{BF1}{article}{
author={Brivio, S.},
  author={Favale, F. F.},
  title={Genus 2 curves and generalized theta divisors},
  journal={Bull. Sci. Math.},
  volume={155},
  date={2019},
  pages={112--140},
  issn={0007-4497},
  doi={10.1016/j.bulsci.2019.05.002},
}

\bib{BF2}{article}{
  author={Brivio, S.},
  author={Favale, F. F.},
  title={On vector bundle over reducible curves with a node},
  date={2019},
  note={To appear in {\it Advances in Geometry}},
  doi={10.1515/advgeom-2020-0010},
}

\bib{BF3}{article}{
  author={Brivio, S.},
  author={Favale, F. F.},
  title={On Kernel Bundle over reducible curves with a node},
  date={2019},
  note={To appear in {\it International Journal of Mathematics}},
  doi={10.1142/S0129167X20500548},
}

\bib{Bri1}{article}{
  author={Brivio, S.},
  title={A note on theta divisors of stable bundles},
  journal={Rev. Mat. Iberoam.},
   volume={31},
   date={2015},
  number={2},
  pages={601--608},
   issn={0213-2230},
  review={\MR{3375864}},
  doi={10.4171/RMI/846},
}

 \bib{Bri2}{article}{
 author={Brivio, S.},
 title= {Families of vector bundles and linear systems of theta
               divisors},
  JOURNAL = {Internat. J. Math.},
    VOLUME = {28},
       date = {2017},
    NUMBER = {6},
     PAGES = {1750039, 16}
 }


\bib{Br}{article}{
   author={Bradlow, S. B.},
   title={Coherent systems: a brief survey},
   note={With an appendix by H. Lange},
   conference={
      title={Moduli spaces and vector bundles},
   },
   book={
      series={London Math. Soc. Lecture Note Ser.},
      volume={359},
      publisher={Cambridge Univ. Press, Cambridge},
   },
   date={2009},
   pages={229--264},
   review={\MR{2537071}},
}

\bib{BV}{article}{
 author={Brivio, S.},
  author={Verra, A.},
 title={Pl\"{u}cker forms and the theta map},
 journal={Amer. J. Math.},
 volume={134},
 date={2012},
 number={5},
 pages={1247--1273},
 issn={0002-9327},
 review={\MR{2975235}},
 doi={10.1353/ajm.2012.0034},
 }

\bib{But}{article}{
 author={Butler, D. C.},
 title={Birational maps of moduli of Brill-Noether pairs},
 note={Preprint available at arXiv:alg-geom/9705009v1}
}

\bib{CF}{article}{
author={Catanese, Fabrizio},
author={Franciosi, Marco},
title={Divisors of small genus on algebraic surfaces and projective embeddings},
journal={Contemp.Math.AMS. Sottoserie Israel Mathematical Conference Proceedings},
date={1996}}

\bib{CFHR}{article}{
   author={Catanese, Fabrizio},
   author={Franciosi, Marco},
   author={Hulek, Klaus},
   author={Reid, Miles},
   title={Embeddings of curves and surfaces},
   journal={Nagoya Math. J.},
   volume={154},
   date={1999},
   pages={185--220},
   issn={0027-7630},
   review={\MR{1689180}},
   doi={10.1017/S0027763000025381},
}



\bib{He}{article}{
   author={He, Min},
   title={Espaces de modules de syst\`emes coh\'{e}rents},
   language={French},
   journal={Internat. J. Math.},
   volume={9},
   date={1998},
   number={5},
   pages={545--598},
}


\bib{KN}{article}{
    author={King, A.D.},
   author={Newstead, P. E.},
   title={Moduli of Brill-Noether pairs on algebraic curves},
   journal={Internat. J. Math.},
   volume={6},
   date={1995},
   number={5},
   pages={733--748}
}

\bib{Le}{article}{
   author={Le Potier, Joseph},
   title={Syst\`emes coh\'{e}rents et structures de niveau},
   language={French, with English and French summaries},
   journal={Ast\'{e}risque},
   number={214},
   date={1993},
   pages={143},
   issn={0303-1179}
}
\bib{N}{article}{
   author={Newstead, P. E.},
   title={Existence of $\alpha$-stable coherent systems on algebraic curves},
   conference={
      title={Grassmannians, moduli spaces and vector bundles},
   },
   book={
      series={Clay Math. Proc.},
      volume={14},
      publisher={Amer. Math. Soc., Providence, RI},
   },
   date={2011},
   pages={121--139},
}

\bib{Ses}{book}{
   author={Seshadri, C. S.},
   title={Fibr\'{e}s vectoriels sur les courbes alg\'{e}briques},
   language={French},
   series={Ast\'{e}risque},
   volume={96},
   note={Notes written by J.-M. Drezet from a course at the \'{E}cole Normale
   Sup\'{e}rieure, June 1980},
   publisher={Soci\'{e}t\'{e} Math\'{e}matique de France, Paris},
   date={1982},
   pages={209},
   review={\MR{699278}},
}

    
\bib{T2}{article}{
    author={Teixidor i Bigas, M.},
    title={Vector bundles on reducible curves and applications},
    journal={Grassmannians, moduli spaces and vector bundles, Clay Math. Proceedings},
    volume={14},
    date={2011},
    pages={169--180},
    publisher={Amer. Math. Soc. Providence, RI},
}
    
\end{biblist}
\end{bibdiv}

\end{document}